\newtheorem*{rep@theorem}{\rep@title}
\newcommand{\newreptheorem}[2]{%
\newenvironment{rep#1}[1]{%
 \def\rep@title{#2 \ref{##1}}%
 \begin{rep@theorem}}%
 {\end{rep@theorem}}}
\newtheorem*{rep@cor}{\rep@title}
\newcommand{\newrepcor}[2]{%
\newenvironment{rep#1}[1]{%
 \def\rep@title{#2 \ref{##1}}%
 \begin{rep@cor}}%
 {\end{rep@cor}}}
\newtheorem*{rep@prop}{\rep@title}
\newcommand{\newrepprop}[2]{%
\newenvironment{rep#1}[1]{%
 \def\rep@title{#2 \ref{##1}}%
 \begin{rep@prop}}%
 {\end{rep@prop}}}
\newtheorem{cor}{Corollary}[section]
\newtheorem{corx}{Corollary}
\newtheorem{theorem}[cor]{Theorem}
\newtheorem{thmx}[corx]{Theorem}
\newtheorem{prop}[cor]{Proposition}
\newtheorem{lemma}[cor]{Lemma}
\theoremstyle{definition}
\newtheorem{defi}[cor]{Definition}
\theoremstyle{remark}
\newtheorem{remark}[cor]{Remark}
\newtheorem*{remark*}{Remark}
\newcommand{\C}{{\mathbb C}}
\newcommand{\R}{{\mathbb R}}
\newcommand{\Hyp}{\mathbb{H}}
\newcommand{\PSL}{\mathrm{PSL}}
\newcommand{\ph}{\varphi}
\newcommand{\RP}{\R\mathrm{P}}
\newcommand{\f}{\mathfrak{f}}
\newcommand{\g}{\mathfrak{g}}
\newcommand{\tr}{\mathrm{tr}\,}
\newcommand{\grad}{\operatorname{grad}}%{\mbox{grad}}
\newcommand{\D}{\mathbb{D}}
\def\Hess{\mathrm{Hess}}
\begin{document}

\setcounter{secnumdepth}{3}
\setcounter{tocdepth}{2}

\title[On the maximal dilatation of minimal Lagrangian extensions]{On the maximal dilatation of quasiconformal minimal Lagrangian extensions}

\author[Andrea Seppi]{Andrea Seppi}
\address{Andrea Seppi: CNRS and Université Grenoble Alpes, 100 Rue des Mathématiques 38610 Gières, France.} \email{andrea.seppi@univ-grenoble-alpes.fr}

%\date{\today}

%\thanks{The authors were partially supported by FIRB 2010 project ``Low dimensional geometry and topology'' (RBFR10GHHH003). The first author was partially supported by
%PRIN 2012 project ``Moduli strutture algebriche e loro applicazioni''.
%The first two authors are members of the national research group GNSAGA}

\begin{abstract}
Given a quasisymmetric homeomorphism $\ph$ of the circle, Bonsante and Schlenker proved the existence and uniqueness of the minimal Lagrangian extension $f_\ph:\Hyp^2\to\Hyp^2$ to the hyperbolic plane. By previous work of the author, its maximal dilatation satisfies $\log K(f_\ph)\leq C||\ph||_{cr}$, where $||\ph||_{cr}$ denotes the cross-ratio norm. We give constraints on the value of an optimal such constant $C$, and discuss possible lower inequalities, by studying two one-parameter families of minimal Lagrangian extensions in terms of maximal dilatation and cross-ratio norm.
\end{abstract}

\maketitle

%\tableofcontents

\section{Introduction}

A classical problem in quasiconformal Teichm\"uller theory consists in finding particular quasiconformal extensions $f:\D\to\D$, where $\D$ is the unit disc in $\C$, of a quasisymmetric homeomorphism $\ph:\mathbb \partial\D\to\mathbb \partial\D$ of the circle. Classical extensions are for instance the \emph{Beurling-Ahlfors} extension (\cite{ahl_beur,lehtinen}) and the \emph{Douady-Earle} extension (\cite{douadyearle}). More recently  Markovic (\cite{mar_schoenconj}), and independently Benoist and Hulin (\cite{benoisthulin}), proved the existence of the \emph{harmonic} extension, hence solving the so-called \emph{Schoen conjecture}. As the notion of harmonicity depends on the choice of a Riemannian metric on the target, the Poincaré metric is considered on $\D$, thus identifying the target $\D$ with the hyperbolic plane $\Hyp^2$. Uniqueness of the harmonic extension was previously known (see \cite{litamuniqueness}).

\subsection*{Minimal Lagrangian extensions}

In \cite{bon_schl}, Bonsante and Schlenker proved the existence and uniqueness of the \emph{minimal Lagrangian} extension of $\ph$ --- where the notion of minimal Lagrangian diffeomorphism depends on the choice of the Riemannian metric (here, the Poincaré metric) both on the source and on the target.
Indeed, an orientation-preserving diffeomorphism $f:\Hyp^2\to\Hyp^2$ is called minimal Lagrangian if it is area-preserving and its graph is a minimal surface in the Riemannian product $\Hyp^2\times\Hyp^2$. Minimal Lagrangian diffeomorphisms $f:\Hyp^2\to\Hyp^2$ are also closely related to harmonic maps, as they are characterized by the condition of being the composition $f=h_1\circ (h_2)^{-1}$, where $h_1,h_2$ are harmonic diffeomorphisms to $\Hyp^2$ with opposite Hopf differential -- see for instance \cite{labourieCP,bon_schl}. This paper concerns the study of the minimal Lagrangian extension of a quasisymmetric homeomorphism $\ph:\partial\Hyp^2\to\partial\Hyp^2$, which we will simply denote by $f_\ph$.

More concretely, let us consider $\Hyp^2$ in the upper half-plane model, so that its boundary is identified to $\RP^1\cong \R\cup\{\infty\}$. Given a quasisymmetric homeomorphism $\ph:\mathbb \RP^1\to\mathbb \RP^1$, its  \emph{cross-ratio norm} is defined as:
$$||\ph||_{cr}:=\sup_{cr(Q)=-1}\left|\log\left|cr(\ph(Q))\right|\right|~,$$
where $cr(Q)$ denotes the cross-ratio of a quadruple of points $Q$ in $\RP^1$, and our definition of cross-ratio is such that a quadruple $Q$ is symmetric if and only if $cr(Q)=-1$.

Then, if $K(f_\ph)$ denotes the maximal dilatation of the minimal Lagrangian extension $f_\ph:\Hyp^2\to\Hyp^2$ of $\ph$, the main result of \cite{seppimaximal} is the following inequality:
\begin{equation} \label{eq inequality intro}
\log K(f_\ph)\leq C||\ph||_{cr}~,
\end{equation}
where $C>0$ is a universal constant. We remark that similar results were obtained for the Beurling-Ahlfors extension:
$$\log K(f_\ph^{B\!A})\leq 2||\ph||_{cr}\,,$$
as proved in \cite{ahl_beur}, while \cite{lehtinen} showed:
$$\log K(f_\ph^{B\!A})\leq ||\ph||_{cr}+\log 2\,.$$
Concerning the Douady-Earle extension, in \cite{hu_muzician_douadyearle} Hu and Muzician proved the inequality:
$$\log K(f_\ph^{D\!E})\leq C||\ph||_{cr}\,,$$
thus improving previous results of \cite{douadyearle}.

It seems therefore a natural question to ask what is the best possible value of the constants, for the different extensions taken into account.
Unfortunately, the methods used in \cite{seppimaximal} do not give any control on the value of the constant $C$ appearing in \eqref{eq inequality intro}. On the other hand, a partial lower bound on the constant $C$ is provided by the following estimate, again proved in \cite{seppimaximal}: for every $\epsilon>0$ there exists $\delta>0$ such that:
\begin{equation} \label{eq inequality intro2}
\log K(f_\ph)\geq \left(\frac{1}{2}-\epsilon\right)||\ph||_{cr}~,
\end{equation}
provided $||\ph||_{cr}\leq \delta$.

\subsection*{Two families of minimal Lagrangian diffeomorphisms}

In this paper, we give a rather explicit description of two families of minimal Lagrangian diffeomorphisms, each family depending on a real parameter, for which $||\ph||_{cr}$ varies between $0$ and $+\infty$. We therefore study the best possible value of the constant $C$ for each of these families, as $||\ph||_{cr}$ approaches $0$ and $+\infty$. This will provide more precise conditions on the limit inferior and limit superior of the ratio ${\log K(f_\ph)}/{||\ph||_{cr}}$ as $||\ph||_{cr}\to 0$ and $||\ph||_{cr}\to+\infty$.

The first family of examples consists in the extensions $f_{\ph_\lambda}$ of a \emph{simple (left) earthquake map} on $\partial\Hyp^2$. That is, we consider for every $\lambda\geq 0$ the orientation-preserving homeomorphisms:
$$\ph_\lambda(x)=\begin{cases} x & \textrm{if }x\leq 0 \\ e^{\lambda} x & \textrm{if }x> 0 \end{cases}~.$$
It turns out that $\ph_\lambda$ is quasisymmetric and $||\ph_\lambda||_{cr}=\lambda$. Observe that $\ph_0$ is the identity. Then we prove the following:
 
\begin{thmx} \label{thm simple earth k=0} \label{thm simple earth k=1}
Let $f_{\ph_\lambda}:\Hyp^2\to\Hyp^2$ be the minimal Lagrangian diffeomorphisms which extend a simple left earthquake $\ph_\lambda$ of weight $\lambda$. Then
$$\lim_{\lambda\to 0^+}\frac{\log K(f_{\ph_\lambda})}{||\ph_\lambda||_{cr}}=\frac{2}{\pi}\qquad\text{and}\qquad \lim_{\lambda\to +\infty}\frac{\log K(f_{\ph_\lambda})}{||\ph_\lambda||_{cr}}=\frac{\sqrt 2}{2}~.$$
\end{thmx}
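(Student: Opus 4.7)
The plan is to exploit the $\R^+$-equivariance of the earthquake map $\ph_\lambda$ to reduce the problem to a one-dimensional ODE system. First I would note that $\ph_\lambda$ commutes with the scaling group $x\mapsto \mu x$ for $\mu>0$, which extends to an $\R^+$-action on $\Hyp^2$ by isometries in the upper half-plane model. By the uniqueness of the minimal Lagrangian extension proved by Bonsante and Schlenker, $f_{\ph_\lambda}$ must commute with this $\R^+$-action. Introducing polar coordinates $z=e^u e^{i\theta}$ with $u\in\R$ and $\theta\in(0,\pi)$, this equivariance forces an expression of the form
\[
f_{\ph_\lambda}(e^u e^{i\theta})=e^{u+v_\lambda(\theta)}\, e^{i\beta_\lambda(\theta)}\,,
\]
for some functions $v_\lambda,\beta_\lambda:(0,\pi)\to\R$, with boundary values compatible with $\ph_\lambda$.

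The next step is to derive the system of ODEs satisfied by $(v_\lambda,\beta_\lambda)$ from the minimal Lagrangian conditions. The area-preserving condition, in hyperbolic area forms, yields $\beta_\lambda'(\theta)\,\sin^2\theta=\sin^2\beta_\lambda(\theta)$, while the minimality of the graph in $\Hyp^2\times\Hyp^2$ gives a complementary equation, which I would encode via Schoen's self-adjoint $(1,1)$-tensor $b$ of determinant $1$ satisfying the Codazzi equation and such that $f_{\ph_\lambda}^{\ast}g=g(b\cdot,b\cdot)$. I expect the coupled system to admit a first integral and thus reduce to a single quadrature; the parameter $\lambda$ is then fixed by the boundary behaviour $\beta_\lambda(0^+)=0$ and $\beta_\lambda(\pi^-)=\pi$, together with the jump $v_\lambda(0^+)-v_\lambda(\pi^-)=\lambda$ reflecting the multiplicative factor $e^\lambda$ at $0$.

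Because $f_{\ph_\lambda}$ is $\R^+$-equivariant, its Beltrami coefficient, and hence the local dilatation $K_\lambda(\theta)$, depends only on $\theta$, so $\log K(f_{\ph_\lambda})=\sup_{\theta\in(0,\pi)}\log K_\lambda(\theta)$. I would expect a suitable involution intertwining source and target angles (e.g. a reflection fixing the axis of the underlying geodesic lamination) to pin down the maximizing $\theta^\ast$, yielding an implicit but tractable expression for $K(f_{\ph_\lambda})$ in terms of $\lambda$ through the first integral of the ODE.

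The two asymptotics are then extracted separately. For $\lambda\to 0^+$ I would linearize the quadrature, writing $\beta_\lambda(\theta)=\theta+\lambda\beta^{(1)}(\theta)+O(\lambda^2)$ and $v_\lambda(\theta)=\lambda v^{(1)}(\theta)+O(\lambda^2)$, compute $\log K_\lambda(\theta)$ to first order in $\lambda$, and read off the coefficient $2/\pi$ after maximizing over $\theta$. For $\lambda\to+\infty$ a rescaling of $\theta$ near the singular locus is needed: I anticipate that the solution develops a boundary layer in a neighbourhood of $\theta=0$, and that the constant $\sqrt{2}/2$ emerges from the leading behaviour of the rescaled ODE in this layer. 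The principal obstacle is this large-$\lambda$ analysis: one must control the boundary layer, show that the maximal dilatation is indeed attained inside it (and not in the bulk), and pass rigorously from the implicit description of $f_{\ph_\lambda}$ to the clean limiting constant.
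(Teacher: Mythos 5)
Your strategy coincides with the paper's. The paper works in the band model $w=u+iv$, $|u|<\pi/2$, which is your polar coordinate system up to $\theta=\pi/2-u$, and makes the same equivariant ansatz $\f(u,v)=(g(u),v+h(u))$; your area-preservation equation $\beta_\lambda'(\theta)\sin^2\theta=\sin^2\beta_\lambda(\theta)$ is exactly the condition $\det b=1$, which integrates to $\cot\beta_\lambda=\cot\theta+C$ and, for a global diffeomorphism with the correct boundary behaviour, forces $\beta_\lambda(\theta)=\theta$. The remaining content of minimality is then the Codazzi equation $d^\nabla b=0$ for the explicit positive self-adjoint root $b$, which reduces to a single first-order ODE solvable by quadrature, $h_k'(u)=2k^2\cos^2 u/\sqrt{1-k^4\cos^4 u}$ for $k\in[0,1)$. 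The local dilatation depends only on $u$ and is maximized at $u=0$ by the reflection symmetry you anticipate, giving $K(\f_k)=(1+k^2)/(1-k^2)$, while $\lambda_k=\int h_k'$ is expressed through complete elliptic integrals. So the reduction you propose is the right one.

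Two points in your outline would need correction before it becomes a proof. First, the constants are only asserted: for $\lambda\to0^+$ one needs $\lambda_k=\pi k^2+O(k^4)$ (from $K(k)-E(k)=\tfrac{\pi}{4}k^2+O(k^4)$) against $\log K(\f_k)=2k^2+O(k^6)$, whence $2/\pi$. Second, and more substantively, your large-$\lambda$ boundary layer is placed at the wrong spot: as $k\to1^-$ both the integrand $h_k'$ and the local dilatation concentrate at $u=0$, i.e.\ at $\theta=\pi/2$, the axis $\{x=0\}$ supporting the earthquake lamination --- not near $\theta=0$, where $h_k'$ stays bounded. A rescaling near $\theta=0$ would find nothing, and the sup of the dilatation is attained on the axis, not in any layer near the boundary of the angular interval. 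The constant $\sqrt2/2$ then comes from the factor $\sqrt{1+k^2\cos^2 u}\to\sqrt2$ at the concentration point, which relates $\lambda_k$ to $4\left(K(k)-E(k)\right)\sim-2\log(1-k^2)$, combined with $\log K(\f_k)\sim-\log(1-k^2)$.
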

\noindent Numerically, we have
$$\frac{2}{\pi}\approx 0.64~, \qquad\frac{\sqrt 2}{2}\approx 0.71~.$$

We remark that the Douady-Earle extension $f^{D\!E}_{\ph_\lambda}$ of simple earthquakes was studied in \cite{hu_muzician_douadyearle}, where the following inequality was obtained:
$$\lim_{\lambda\to +\infty}\frac{\log K(f^{D\!E}_{\ph_\lambda})}{||\ph_\lambda||_{cr}}\geq \frac{1}{4}~.$$

The second family considered consists in the extensions  of \emph{power functions} $\psi_\alpha:\RP^1\to\RP^1$. Namely, we consider the orientation-preserving homeomorphisms:
$$\psi_\alpha(x)=\begin{cases} x^\alpha & \textrm{if }x\geq 0 \\ -|x|^\alpha  & \textrm{if }x< 0 \end{cases}~.$$
When $\alpha=1$, $\psi_1$ is the identity. Since $(\psi_\alpha)^{-1}=\psi^{1/\alpha}$, and the inverse of a minimal Lagrangian map is minimal Lagrangian with the same maximal dilatation, we will only consider the case $\alpha\in[1,+\infty).$
We will prove the following:

\begin{thmx} \label{thm power a=0}
Let $f_{\psi_\alpha}:\Hyp^2\to\Hyp^2$ be the minimal Lagrangian diffeomorphisms which extend the power function $\psi_\alpha$. Then
$$\limsup_{\alpha\to 1^+}\frac{\log K(f_{\psi_\alpha})}{||\psi_\alpha||_{cr}}\leq  \frac{\sqrt 2}{\log \left(3+\sqrt 2\right)}\qquad\text{and}\qquad \lim_{\alpha\to +\infty}\frac{\log K(f_{\psi_\alpha})}{||\psi_\alpha||_{cr}}=0~.$$
\end{thmx}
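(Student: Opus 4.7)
The proof rests on two symmetries of the power function: $\psi_\alpha(cx)=c^\alpha\psi_\alpha(x)$ for every $c>0$, and $\psi_\alpha(-x)=-\psi_\alpha(x)$. By the uniqueness of the minimal Lagrangian extension \cite{bon_schl}, the map $f_{\psi_\alpha}:\Hyp^2\to\Hyp^2$ inherits these: it intertwines the hyperbolic isometry $z\mapsto cz$ of the source with $z\mapsto c^\alpha z$ of the target, and commutes with the reflection $z\mapsto -\bar z$. In the upper half-plane model, setting $\xi=x/y$, we may write $f_{\psi_\alpha}(x+iy)=y^\alpha(U(\xi)+iV(\xi))$ with $U$ odd and $V$ even, which reduces the minimal Lagrangian PDE system on $\Hyp^2$ to an ODE system in the single variable $\xi$, parametrized by $\alpha$.

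Given this reduction, I would handle the cross-ratio norm and the maximal dilatation separately. For $\|\psi_\alpha\|_{cr}$, lower bounds come from evaluating the image cross-ratio on specific symmetric quadruples: the family $(-s,-r,r,s)$, which is symmetric precisely when $s/r=3+2\sqrt 2=(1+\sqrt 2)^2$, yields $|cr(\psi_\alpha(Q))|=\sinh^2(\alpha\log(1+\sqrt 2))$, producing a lower bound with first-order slope $\sqrt 2\log(3+2\sqrt 2)$ as $\alpha\to 1^+$ and linear growth $\sim 2\alpha\log(1+\sqrt 2)$ as $\alpha\to +\infty$; less symmetric families such as $(-a,0,1,b)$ subject to $ab=2a+b$ furnish complementary bounds. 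For $\log K(f_{\psi_\alpha})$ near $\alpha=1$, the plan is to linearize the reduced ODE system around the identity solution ($\alpha=1$, $f_{\psi_1}=\mathrm{id}$): writing $f_{\psi_\alpha}=\mathrm{id}+(\alpha-1)\dot f+O((\alpha-1)^2)$, one solves the linear problem for $\dot f$ explicitly, extracts the first-order Beltrami coefficient $\mu_1(\xi)$, and obtains $\log K(f_{\psi_\alpha})=2(\alpha-1)\|\mu_1\|_\infty+o(\alpha-1)$. Comparing with the cross-ratio lower bound then gives the limsup estimate $\sqrt 2/\log(3+\sqrt 2)$. For the $\alpha\to +\infty$ regime, a qualitative asymptotic analysis of the reduced ODE shows that $\log K(f_{\psi_\alpha})$ grows only sublinearly in $\alpha$, while $\|\psi_\alpha\|_{cr}$ grows linearly, yielding the zero limit of the ratio.

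The main obstacle is the concrete analysis of the reduced ODE system in both regimes: near $\alpha=1$, one must solve the linearized minimal Lagrangian equation precisely enough to identify $\|\mu_1\|_\infty$ and extract the exact constant $\sqrt 2/\log(3+\sqrt 2)$; and as $\alpha\to+\infty$, one must control the dilatation of the extension while the boundary map degenerates to a step function near $x=\pm 1$, which is the delicate point in establishing the sublinear bound on $\log K(f_{\psi_\alpha})$.
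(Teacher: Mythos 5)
Your reduction of the problem via the symmetries $\psi_\alpha(cx)=c^\alpha\psi_\alpha(x)$ and $\psi_\alpha(-x)=-\psi_\alpha(x)$, together with uniqueness of the minimal Lagrangian extension, is sound and is essentially the same ansatz the paper uses (the paper works in Fermi coordinates $(s,t)$ along the geodesic $\{x=0\}$, in which the equivariance becomes independence of $t$, rather than in your variable $\xi=x/y$, but these are the same symmetry class of maps). Your cross-ratio estimates also match the paper exactly: the quadruple family $(-s,-r,r,s)$ with $s/r=(1+\sqrt2)^2$ is precisely the quadruple $Q'$ used there, your identity $|cr(\psi_\alpha(Q))|^{\pm1}=\sinh^2(\alpha\log(1+\sqrt2))$ is correct, and the resulting slope $\sqrt2\log(3+2\sqrt2)=\sqrt2\bigl(\log(\sqrt2+1)-\log(\sqrt2-1)\bigr)$ at $\alpha=1$ and the linear growth $\sim 2\alpha\log(1+\sqrt2)$ at infinity are exactly what is needed (and are stronger than the paper's simpler bound $\log(2^\alpha-1)$ in the second regime).

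The genuine gap is the maximal dilatation. Everything in the theorem hinges on knowing how $\log K(f_{\psi_\alpha})$ behaves, and your proposal does not actually produce this: near $\alpha=1$ you reduce it to computing $\|\mu_1\|_\infty$ for the linearized problem but never solve that problem (note also that passing from a pointwise expansion $\mu=(\alpha-1)\mu_1+O((\alpha-1)^2)$ to $\sup_{\Hyp^2}|\mu|=(\alpha-1)\|\mu_1\|_\infty+o(\alpha-1)$ requires uniform control of the error over the noncompact domain, which a formal linearization does not give); and as $\alpha\to+\infty$ the assertion that ``a qualitative asymptotic analysis shows $\log K$ grows sublinearly'' is the entire content of the second limit, asserted rather than proved. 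The paper closes exactly this gap by solving the reduced problem in closed form: it seeks the Codazzi tensor in the form $b=\Hess\rho-\rho\mathbbm{1}$ with $\rho=r(s)$ depending only on the distance $s$ from the invariant geodesic, so that $b$ is automatically self-adjoint and Codazzi and the condition $\det b=1$ becomes a first-order ODE for the eigenvalue $g(s)=r'(s)\tanh(s)-r(s)$, solved explicitly by $g_a(s)=\sqrt{1+a(1-\tanh^2(s))}$. This yields the \emph{exact} formulas $\alpha=\sqrt{1+a}$ for the boundary exponent and $K(f_{\psi_\alpha})=g_a(0)^2=1+a=\alpha^2$, i.e.\ $\log K(f_{\psi_\alpha})=2\log\alpha$, from which both $\log K=2(\alpha-1)+O((\alpha-1)^2)$ near $\alpha=1$ (hence the constant $2/(\sqrt2\log(3+2\sqrt2))=\sqrt2/\log(3+2\sqrt2)$, the value your slope computation is consistent with) and the sublinear growth at infinity follow immediately. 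To complete your argument you would need to carry out the explicit solution of your reduced ODE system in $\xi$ — at which point you would in effect be rederiving $K=\alpha^2$ — so as written the proposal is a correct plan with its central computation missing.
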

\noindent Numerically, the first upper bound is ${\sqrt 2}/{\log \left(3+\sqrt 2\right)}\approx 0.80.$

Since Theorem \ref{thm simple earth k=0} exhibits a family of minimal Lagrangian extensions for which the ratio ${\log K(f_\ph)}/{||\ph||_{cr}}$ converges to a positive value as $||\ph||_{cr}\to+\infty$, we obtain as a consequence that the inequality \eqref{eq inequality intro} of \cite{seppimaximal} is \emph{sharp};

\begin{corx}
There exists no function $F:[0,+\infty)\to[0,+\infty)$ which grows less than linearly such that ${\log K(f_\ph)}\leq F({||\ph||_{cr}})$ for every minimal Lagrangian extension $f_\ph:\Hyp^2\to\Hyp^2$ of a quasisymmetric homeomorphism $\ph: \RP^1\to\RP^1$.
\end{corx}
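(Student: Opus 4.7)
The plan is to deduce the corollary directly from the second limit of Theorem \ref{thm simple earth k=1}, by a short argument by contradiction. Interpret ``grows less than linearly'' as the condition $\lim_{x\to+\infty} F(x)/x = 0$ (this is presumably the intended meaning; it should be stated explicitly at the start of the proof).

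Suppose by contradiction that such an $F$ exists. Apply the hypothesis $\log K(f_\ph) \leq F(||\ph||_{cr})$ to the family $\ph=\ph_\lambda$ of simple left earthquakes, using the identity $||\ph_\lambda||_{cr} = \lambda$ recorded just before the statement of Theorem \ref{thm simple earth k=1}. This yields, for every $\lambda > 0$,
\[
\frac{\log K(f_{\ph_\lambda})}{||\ph_\lambda||_{cr}} \;\leq\; \frac{F(\lambda)}{\lambda}~.
\]
Passing to the limit $\lambda\to+\infty$, the left-hand side tends to $\sqrt{2}/2 > 0$ by Theorem \ref{thm simple earth k=1}, whereas the right-hand side tends to $0$ by the sublinearity assumption on $F$. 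This contradiction proves the corollary.

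There is no real obstacle: the entire content is packaged inside the second limit of Theorem \ref{thm simple earth k=1}, and the corollary amounts to reading that limit in terms of lower bounds on any putative comparison function. The only point worth being careful about in the write-up is making the notion of ``less than linear growth'' precise, so that the reader sees immediately why $F(\lambda)/\lambda\to 0$ is the right hypothesis to contradict.
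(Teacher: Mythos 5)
Your proof is correct and is exactly the argument the paper intends: the corollary is stated as an immediate consequence of the fact that $\log K(f_{\ph_\lambda})/\|\ph_\lambda\|_{cr}\to\sqrt{2}/2>0$ as $\lambda\to+\infty$ for the simple earthquake family, which rules out any sublinear upper bound. Your explicit contradiction argument, including the remark that ``grows less than linearly'' should be read as $F(x)/x\to 0$, matches the paper's reasoning.
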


On the other hand, Theorem \ref{thm power a=0} shows an inequality of the form \eqref{eq inequality intro2}, as obtained in \cite{seppimaximal}, cannot hold globally, but only holds for bounded values of $||\ph||_{cr}$.

\begin{corx}
There exists no lower bound of the form ${\log K(f_\ph)}\geq C{||\ph||_{cr}}$, that holds for every minimal Lagrangian extension $f_\ph:\Hyp^2\to\Hyp^2$ of a quasisymmetric homeomorphism $\ph: \RP^1\to\RP^1$.
\end{corx}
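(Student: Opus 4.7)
The plan is that this corollary follows immediately, by contraposition, from the second half of Theorem \ref{thm power a=0}. The strategy is to exhibit a family of quasisymmetric homeomorphisms along which the ratio $\log K(f_\ph)/||\ph||_{cr}$ becomes arbitrarily small while $||\ph||_{cr}$ remains positive; Theorem \ref{thm power a=0} hands us exactly such a family, namely the power functions $\psi_\alpha$ as $\alpha\to+\infty$. No new computation is required.

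Concretely, I would argue by contradiction: suppose there existed a constant $C>0$ such that $\log K(f_\ph)\geq C\,||\ph||_{cr}$ for every quasisymmetric $\ph$. Applied to $\ph=\psi_\alpha$ with $\alpha>1$, this gives
\[
\frac{\log K(f_{\psi_\alpha})}{||\psi_\alpha||_{cr}}\geq C,
\]
as soon as $||\psi_\alpha||_{cr}>0$. Letting $\alpha\to+\infty$ and invoking Theorem \ref{thm power a=0} yields $0\geq C$, a contradiction.

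The only point that needs a brief justification is that $||\psi_\alpha||_{cr}$ is indeed strictly positive for $\alpha>1$, so that division by it is legitimate; this holds because $\psi_\alpha$ is not a Möbius transformation when $\alpha\neq 1$ (equivalently, one can observe that the statement of Theorem \ref{thm power a=0} itself requires the ratio to be well-defined, hence implicitly asserts $||\psi_\alpha||_{cr}>0$ for $\alpha>1$). There is no genuine obstacle here: the corollary is essentially a reformulation of the vanishing of the $\alpha\to+\infty$ limit in Theorem \ref{thm power a=0}.
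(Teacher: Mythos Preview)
Your argument is correct and is essentially the paper's own: the corollary is stated immediately after Theorem~\ref{thm power a=0} as a direct consequence of the vanishing limit $\lim_{\alpha\to+\infty}\log K(f_{\psi_\alpha})/||\psi_\alpha||_{cr}=0$, with no further proof given. The positivity of $||\psi_\alpha||_{cr}$ for $\alpha>1$ that you flag is also established in the paper (Lemma~\ref{lemma cross-ratio psi alpha} gives $||\psi_\alpha||_{cr}\geq\log(2^\alpha-1)>0$).
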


\subsection*{Applications to the study of optimal constants}

As already mentioned, Theorem \ref{thm simple earth k=0}  and Theorem \ref{thm power a=0}, as an application, provide explicit bounds on the quantities:
$$\liminf\frac{\log K(f_\ph)}{||\ph||_{cr}}\qquad\text{and}\qquad \limsup\frac{\log K(f_\ph)}{||\ph||_{cr}}~,$$
as $||\ph||_{cr}\to 0$ and $||\ph||_{cr}\to +\infty$, which can be compared with the known estimates for the more classical extensions. More precisely, we obtain:

\begin{corx} \label{cor ph to 0}
If $f_\ph:\Hyp^2\to\Hyp^2$ denotes the minimal Lagrangian extension of a quasisymmetric homeomorphism $\ph: \RP^1\to\RP^1$, then:
$$\liminf_{||\ph||_{cr}\to 0^+}\frac{\log K(f_\ph)}{||\ph||_{cr}}\in\left[\frac{1}{2},\frac{2}{\pi}\right]\qquad\text{and}\qquad \limsup_{||\ph||_{cr}\to 0^+}\frac{\log K(f_\ph)}{||\ph||_{cr}}\in\left[\frac{2}{\pi},+\infty\right)~.$$
\end{corx}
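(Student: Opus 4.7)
The plan is to deduce the four bounds directly from the two estimates \eqref{eq inequality intro} and \eqref{eq inequality intro2} already recalled from \cite{seppimaximal}, together with the one-parameter family of simple earthquake extensions treated in Theorem \ref{thm simple earth k=0}. No additional analytic input should be required: the corollary is essentially a bookkeeping statement that packages these facts.

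First I would obtain the bound $\liminf \geq 1/2$ by invoking \eqref{eq inequality intro2}: for every $\epsilon > 0$ there is $\delta > 0$ such that $\log K(f_\ph) / ||\ph||_{cr} \geq 1/2 - \epsilon$ whenever $||\ph||_{cr} \leq \delta$, which gives $\liminf \geq 1/2 - \epsilon$ for every $\epsilon > 0$, and hence the claim after sending $\epsilon \to 0$. Similarly, for the finiteness part of the second interval, I would invoke \eqref{eq inequality intro} to deduce that the ratio is uniformly bounded above by the universal constant $C > 0$ appearing there, so $\limsup \leq C < +\infty$.

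The two remaining inequalities, namely $\liminf \leq 2/\pi$ and $\limsup \geq 2/\pi$, I would obtain simultaneously by using the family $\{\ph_\lambda\}_{\lambda > 0}$ of simple left earthquakes as a test family. Since $||\ph_\lambda||_{cr} = \lambda \to 0^+$ as $\lambda \to 0^+$, this provides an admissible sequence of quasisymmetric homeomorphisms with $||\ph_\lambda||_{cr} \to 0^+$. By Theorem \ref{thm simple earth k=0}, the ratio $\log K(f_{\ph_\lambda}) / ||\ph_\lambda||_{cr}$ converges to $2/\pi$ along this sequence; the value $2/\pi$ is therefore simultaneously an upper bound for $\liminf$ and a lower bound for $\limsup$. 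No real obstacle arises in this argument; the substance of the corollary lies entirely in Theorem \ref{thm simple earth k=0} and in the two inequalities from \cite{seppimaximal}, and the proof reduces to assembling them correctly.
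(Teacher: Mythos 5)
Your proposal is correct and follows exactly the paper's own argument: the lower bound $1/2$ on the $\liminf$ comes from Theorem \ref{thm seppi 2}, the finiteness of the $\limsup$ from Theorem \ref{thm seppi}, and both bounds involving $2/\pi$ from the simple-earthquake family of Theorem \ref{thm simple earth k=0}, whose ratio tends to $2/\pi$ as $\lambda\to 0^+$. Nothing further is needed.
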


Such result can be compared for instance with the Beurling-Ahlfors extension $f_\ph^{B\!A}$, for which the limit superior of $\log K(f_\ph^{B\!A})/||\ph||_{cr}$ is at most $2$ as $||\ph||_{cr}\to 0^+$, as a consequence of the inequalities mentioned above. The limit superior is finite also for the Douady-Earle extension, although an explicit value seems not be available in the literature.

It is also worth mentioning that Corollary \ref{cor ph to 0} highlights that the lower bound \eqref{eq inequality intro2}, obtained in \cite{seppimaximal} by means of geometric methods using Anti-de Sitter geometry, is at least not far from being optimal, since $2/\pi\approx 0.64$.

On the other hand, when $||\ph||_{cr}\to +\infty$, we obtain the following corollary:

\begin{corx} \label{cor ph to infty}
If $f_\ph:\Hyp^2\to\Hyp^2$ denotes the minimal Lagrangian extension of a quasisymmetric homeomorphism $\ph: \RP^1\to\RP^1$, then:
$$\liminf_{||\ph||_{cr}\to +\infty}\frac{\log K(f_\ph)}{||\ph||_{cr}}=0\qquad\text{and}\qquad \limsup_{||\ph||_{cr}\to +\infty}\frac{\log K(f_\ph)}{||\ph||_{cr}}\in\left[\frac{\sqrt 2}{2},+\infty\right)~.$$
\end{corx}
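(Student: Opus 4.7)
The plan is to deduce both statements essentially as corollaries of Theorems \ref{thm simple earth k=1} and \ref{thm power a=0}, combined with the already-established global upper bound \eqref{eq inequality intro} from \cite{seppimaximal}. No new ingredient is needed: once those two theorems have been proved, the corollary reduces to bookkeeping, so I do not expect any substantial obstacle here.

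For the first identity, $\liminf_{||\ph||_{cr}\to +\infty}\log K(f_\ph)/||\ph||_{cr}=0$, the inequality $\liminf\geq 0$ is immediate, since $K(f_\ph)\geq 1$ and $||\ph||_{cr}>0$ for every non-trivial quasisymmetric $\ph$. For the reverse inequality, I would use the family of power function extensions $f_{\psi_\alpha}$: as $\alpha\to +\infty$ one has $||\psi_\alpha||_{cr}\to +\infty$ (a consequence of the explicit cross-ratio computation that already enters the proof of Theorem \ref{thm power a=0}), while $\log K(f_{\psi_\alpha})/||\psi_\alpha||_{cr}\to 0$ by Theorem \ref{thm power a=0} itself. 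This sequence therefore witnesses $\liminf\leq 0$, and the two bounds combine to the desired equality.

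For the second statement, the lower bound $\sqrt 2/2$ on the limsup follows from Theorem \ref{thm simple earth k=1}. Indeed, the simple left earthquake maps satisfy $||\ph_\lambda||_{cr}=\lambda$, so letting $\lambda\to +\infty$ produces a sequence along which $||\ph||_{cr}\to+\infty$ and $\log K(f_{\ph_\lambda})/||\ph_\lambda||_{cr}\to \sqrt 2/2$. Since by definition the limsup is at least the limit along any such sequence, this gives $\limsup\geq \sqrt 2/2$. The finiteness of the limsup, in turn, is immediate from \eqref{eq inequality intro}: dividing both sides by $||\ph||_{cr}$ yields $\log K(f_\ph)/||\ph||_{cr}\leq C$ uniformly in $\ph$, hence $\limsup\leq C<+\infty$. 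The only verification that is not completely trivial in this argument is the divergence $||\psi_\alpha||_{cr}\to +\infty$ as $\alpha\to +\infty$, but this is subsumed in the analysis of the power-function family carried out for Theorem \ref{thm power a=0}.
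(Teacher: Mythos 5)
Your proposal is correct and follows essentially the same route as the paper: the power-function family $f_{\psi_\alpha}$ with $\alpha\to+\infty$ forces the limit inferior to be $0$, the earthquake family $f_{\ph_\lambda}$ with $\lambda\to+\infty$ gives the lower bound $\sqrt 2/2$ on the limit superior, and Theorem \ref{thm seppi} gives its finiteness. Your extra remark that $||\psi_\alpha||_{cr}\to+\infty$ (from the bound $||\psi_\alpha||_{cr}\geq\log(2^\alpha-1)$ of Lemma \ref{lemma cross-ratio psi alpha}) is a point the paper leaves implicit, and it is indeed the one detail worth checking.
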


Again, the reader might want to compare the aforementioned inequality for the Beurling-Ahlfors extension, which shows that the limit superior of $\log K(f_\ph^{B\!A})/||\ph||_{cr}$ is at most $1$ as $||\ph||_{cr}\to +\infty$. Finally, let us observe that from the work of Strebel (\cite{strebel1,strebel2}), it is known that the extremal extension $f_{\ph_\lambda}^e$ (namely, the extension which minimizes the maximal dilatation) of the simple earthquake $\ph_\lambda$ satisfies $K(f_{\ph_\lambda}^e)=O(\lambda^2)$ as $\lambda\to +\infty$. Hence a consequence of Theorem \ref{thm simple earth k=0} is the following:

\begin{corx} \label{cor exp far}
The minimal Lagrangian extension of the simple earthquake $\ph_\lambda$ stays exponentially far from being extremal as $\lambda\to +\infty$.
\end{corx}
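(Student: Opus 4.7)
The plan is a direct combination of Theorem \ref{thm simple earth k=1} with the bound on the extremal extension recalled from Strebel's work, so no fundamentally new argument is needed. I first recall that $||\ph_\lambda||_{cr} = \lambda$, and then Theorem \ref{thm simple earth k=1} translates into the asymptotic
$$\log K(f_{\ph_\lambda}) = \frac{\sqrt{2}}{2}\,\lambda + o(\lambda) \qquad \text{as } \lambda \to +\infty.$$
In particular, for every $\epsilon \in (0, \sqrt{2}/2)$ there exists $\lambda_0 > 0$ such that
$$K(f_{\ph_\lambda}) \geq \exp\!\left(\left(\tfrac{\sqrt{2}}{2} - \epsilon\right)\lambda\right)$$
for all $\lambda \geq \lambda_0$. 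On the other hand, the cited results of Strebel provide a constant $C > 0$ such that the extremal extension satisfies $K(f_{\ph_\lambda}^e) \leq C\,\lambda^2$ for $\lambda$ sufficiently large.

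Combining these two ingredients, the ratio of the maximal dilatations satisfies
$$\frac{K(f_{\ph_\lambda})}{K(f_{\ph_\lambda}^e)} \;\geq\; \frac{\exp\!\left(\left(\frac{\sqrt{2}}{2} - \epsilon\right)\lambda\right)}{C\,\lambda^2},$$
which diverges to $+\infty$ with exponential growth rate in $\lambda$. Equivalently, $\log K(f_{\ph_\lambda}) - \log K(f_{\ph_\lambda}^e)$ is asymptotically at least $(\sqrt{2}/2 - \epsilon)\lambda - 2\log\lambda - \log C$, hence linear in $\lambda$, which is precisely the statement that $f_{\ph_\lambda}$ is exponentially far from extremal as $\lambda \to +\infty$.

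There is effectively no obstacle here: all the content lies in the asymptotic $\sqrt{2}/2$ established earlier, and the role of this corollary is simply to contrast the linear growth of $\log K(f_{\ph_\lambda})$ against the logarithmic growth of $\log K(f_{\ph_\lambda}^e)$. The only point deserving care is the interpretation of ``exponentially far'', which I take to mean that the quotient $K(f_{\ph_\lambda})/K(f_{\ph_\lambda}^e)$ admits a lower bound of the form $c_1 \exp(c_2\lambda)/\lambda^2$ for positive constants $c_1, c_2$; the above chain of inequalities delivers exactly this.
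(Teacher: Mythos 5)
Your proposal is correct and follows essentially the same route as the paper: combine the limit $\log K(f_{\ph_\lambda})/\|\ph_\lambda\|_{cr}\to\sqrt{2}/2$ from Theorem \ref{thm simple earth k=1} (giving $K(f_{\ph_\lambda})\geq e^{C\lambda}$ for large $\lambda$) with Strebel's bound $K(f_{\ph_\lambda}^e)=O(\lambda^2)$, and compare exponential against polynomial growth. Your write-up merely makes the constant $C=\sqrt{2}/2-\epsilon$ and the resulting ratio explicit, which is a harmless refinement of the same argument.
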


Compare also \cite[Theorem 5]{hu_muzician_douadyearle} for the Douady-Earle extension.

\subsection*{Discussion of the techniques}

The proof of the inequality \eqref{eq inequality intro}, as well as the proof of the existence and uniqueness result of Bonsante and Schlenker, involves the use of \emph{Anti-de Sitter} three-dimensional geometry, and in particular the study of \emph{maximal surfaces}. See also \cite{ksurfaces} which provides an alternative proof of the existence, \cite{bbzads,Schlenker-Krasnov,tamburellicmc,bonsepequivariant,toulisse,seppiminimal,barbotkleinian} for related results on (maximal) surfaces, \cite{Mess} as a standard reference on three-dimensional Anti-de Sitter geometry, and \cite{questionsads} for a survey on related questions. On the other hand, we remark that the results of this paper do not rely on Anti-de Sitter geometry, and their setting might also be compared with \cite{toulisse2}.

We will not enter into technical details here, but we mention the idea behind the construction of the two families of minimal Lagrangian extensions which appear in Theorem \ref{thm simple earth k=0}  and Theorem \ref{thm power a=0}. Instead of extending a given quasisymmetric homeomorphism, we rather start from (two) \emph{ansatz} which lead to (two) remarkable families of minimal Lagrangian diffeomorphisms. These ansatz basically consist in looking for diffeomorphisms $f:\Hyp^2\to\Hyp^2$ with a certain invariance with respect to 1-parameter families of isometries of $\Hyp^2$. Such invariance enables to reduce the condition that $f$ is minimal Lagrangian to ordinary differential equations. Solutions of such differential equations are then studied analytically, and the corresponding diffeomorphisms are interpreted from the point of view of hyperbolic geometry.
 This enables to estimate the maximal dilatation of $f$ on the one hand, and on the other the cross-ratio distortion of the boundary values of $f$ (which are recognized as the simple left earthquakes in one case, and the power functions in the other).
 
Finally, we remark that the two one-parameter families $f_{\ph_\lambda}$ and $f_{\psi_\alpha}$ are subfamilies of a promising two-parameter family, whose boundary value is the quasisymmetric homeomorphism
$$x\mapsto=\begin{cases} -|x|^\alpha & \textrm{if } x\leq 0 \\ e^{\lambda} x^\alpha & \textrm{if }x> 0 \end{cases}~,$$
for parameters $\lambda\in[0,+\infty)$ and $\alpha\in(0,+\infty)$. Clearly, this reduces to $\ph_\lambda$ when $\alpha=1$ and to $\psi_\alpha$ when $\lambda=0$. The understanding of the minimal Lagrangian extensions for such more general two-parameter family might lead to more precise estimates in the spirit of Corollary \ref{cor ph to 0} and Corollary \ref{cor ph to infty}. However, a description of this  two-parameter family of minimal Lagrangian diffeomorphisms seems to be difficult to achieve with the techniques used in this paper, and is therefore not tackled here. 

\subsection*{Organization of the paper}

Section \ref{sec preliminaries} contains preliminary results on quasiconformal mappings, minimal Lagrangian diffeomorphisms of $\Hyp^2$, quasisymmetric homeomorphisms, and state the necessary results known from the literature. Section \ref{sec earth simple} studies the first one-parameter family of minimal Lagrangian diffeomorphisms, leading to Theorem \ref{thm simple earth k=0}, whereas Section \ref{sec power} considers the second family and leads to Theorem \ref{thm power a=0}. Finally, Section \ref{sec conclusions} contains the Corollary \ref{cor ph to 0}, \ref{cor ph to infty} and \ref{cor exp far} and compares them with results, known from the literature, for the more classical extensions.

\subsection*{Acknowledgements}

I am extremely grateful to an anonymous referee for several comments which highly improved the present article. I would like to thank Jean-Marc Schlenker for motivating me towards questions of this type since several years, and Francesco Bonsante and Jun Hu for many interesting discussions on related topics.

\section{Preliminaries} \label{sec preliminaries}

In this paper, we consider the hyperbolic plane in the upper half-plane model, namely:
$$\Hyp^2=\left(\left\{z=x+iy\in\C\,:\,y>0\right\},\frac{dx^2+dy^2}{y^2}\right)~,$$
and we identify its visual boundary $\partial_\infty\Hyp^2$ with $\R\mathrm P^1\cong \R\cup\{\infty\}$, which in this model is the projective line $\{y=0\}\cup\{\infty\}$. Recall that, in this model, the group of orientation-preserving isometries of $\Hyp^2$ is isomorphic to $\PSL_2(\R)$, acting on $\C\mathrm P^1\cong\C\cup\{\infty\}$ by M\"obius transformations which preserve the real line $\R\mathrm P^1$.

\subsection*{Quasisymmetric homeomorphisms}

We will adopt the following definition of cross-ratio:
\begin{equation} \label{eq cross ratio}
cr(x_1,x_2,x_3,x_4)=\frac{(x_4-x_1)(x_3-x_2)}{(x_2-x_1)(x_3-x_4)}\,,
\end{equation}
for a quadruple $x_1,x_2,x_3,x_4\in\RP^1\cong\R\cup\{\infty\}$. We say that a quadruple  $Q=(x_1,x_2,x_3,x_4)$ of points of $\R\cup\{\infty\}$ is \emph{symmetric} if the geodesics $\ell,\ell'$ of $\Hyp^2$ connecting $x_1$ to $x_3$ and $x_2$ to $x_4$ intersect orthogonally. Composing with an element of $\PSL_2(\R)$, such a quadruple is equivalent to $(-1,0,1,\infty)$ and therefore has cross-ratio equal to $-1$.

\begin{defi} \label{defi cross ratio norm}
Let $\ph:\RP^1\to\RP^1$ be an orientation-preserving homeomorphism. Then $\ph$ is \emph{quasisymmetric} if 
$$||\ph||_{cr}:=\sup_{cr(Q)=-1}\left|\log\left|cr(\ph(Q))\right|\right|<+\infty\,.$$
In this case, the quantity $||\ph||_{cr}$ is called \emph{cross-ratio norm} of $\ph$.
\end{defi}

It can be shown that $||\ph||_{cr}=0$ if and only if $\ph\in\PSL_2(\R)$. In this case, $\ph$ admits a conformal extension to the upper half-plane $\Hyp^2$, which is defined by the action of the same element of $\PSL_2(\R)$ on $\Hyp^2$ by a M\"obius transformation.

 \subsection*{Quasiconformal diffeomorphisms}
 Ahlfors and Beurling showed that every quasisymmetric homeomorphism $\ph$ admits a \emph{quasiconformal} extension $f:\Hyp^2\to\Hyp^2$ and conversely, every quasiconformal map $f:\Hyp^2\to\Hyp^2$ extends continuously to a quasisymmetric homeomorphism $\ph:\RP^1\to\RP^1$.

It turns out, for instance from Theorem \ref{thm bon schl} below, that a quasisymmetric homeomorphism always admits a \emph{smooth} quasiconformal extension $f:\Hyp^2\to\Hyp^2$. (That is, $f$ is a diffeomorphism of $\Hyp^2$ which extends continuously to $\RP^1$.) We thus provide the definition of quasiconformality only for diffeomorphisms, although there is a more general definition with weaker regularity. (See for instance \cite{ahlforsbook, gardiner2,fletchermarkoviclibro}.)

\begin{defi} \label{defi quasiconformal}
Let $f:\Hyp^2\to\Hyp^2$ be an orientation-preserving diffeomorphism. Then $f$ is \emph{quasiconformal} if 
$$K(f):=\sup_{z\in\C}\frac{\text{major axis of }df_z(\mathbb S^1)}{\text{minor axis of }df_z(\mathbb S^1)}<+\infty~,$$
where we denote $\mathbb S^1=\{|z|=1\}\subset T_z\Hyp^2=\C$, and $df_z$ is the differential of $f$ at the point $z$. In this case, the quantity $K(f)$ is called \emph{maximal dilatation} of $f$.
\end{defi}

Roughly speaking, a quasiconformal diffeomorphism maps, at first order, circles to ellipses of bounded eccentricity. Clearly, $K(f)\geq 1$. It turns out that $K(f)=1$ if and only if $f\in\PSL_2(\R)$. 

\subsection*{Minimal Lagrangian extensions}
In this paper, we will study the properties of the \emph{minimal Lagrangian diffeomorphism} extending $\ph$, which is defined as follows:

\begin{defi} \label{defi min lag}
Let $f:\Hyp^2\to\Hyp^2$ be an orientation-preserving diffeomorphism. Then $f$ is \emph{minimal Lagrangian} if it is area-preserving and its graph is a minimal surface in the Riemannian product $\Hyp^2\times\Hyp^2$.
\end{defi}

An important characterization of minimal Lagrangian diffeomorphisms is the following. Let us denote by $g_{\Hyp^2}$ the hyperbolic metric of $\Hyp^2$.

\begin{prop}[{\cite{labourieCP},\cite[Proposition 1.2.6]{jeremythesis}}] \label{prop char min lag}
An orientation-preserving diffeomorphism $f:\Hyp^2\to\Hyp^2$ is minimal Lagrangian if and only if $f^*g_{\Hyp^2}=g_{\Hyp^2}(b\cdot,b\cdot)$, where $b\in\Gamma(\mathrm{End}(T\Hyp^2))$ is a bundle morphism such that:
\begin{itemize}
\item $b$ is self-adjoint for $g_{\Hyp^2}\,;$
\item $\det b=1\,;$
\item $d^{\nabla}b=0\,$.
\end{itemize}
Here $\nabla$ is the Levi-Civita connection of $g_{\Hyp^2}$, and $d^\nabla b$ is the exterior derivative defined by:
$$(d^\nabla b)(v,w)=\nabla_v (b(w))-\nabla_w (b(v))-b[v,w]~,$$
where $v$ and $w$ are any local smooth vector fields.
\end{prop}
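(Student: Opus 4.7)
The strategy is to construct $b$ canonically from $f$ via polar decomposition, and then to translate each of the two geometric conditions (area-preserving and minimal graph) into a tensorial condition on $b$.

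First I would define $b$ pointwise via the polar decomposition of $df$. Since $f^*g_{\Hyp^2}$ is a Riemannian metric on $\Hyp^2$, there exists a unique $g_{\Hyp^2}$-self-adjoint, positive-definite bundle endomorphism $b$ of $T\Hyp^2$ such that
\[
f^*g_{\Hyp^2}(v,w)=g_{\Hyp^2}(bv,bw)
\]
for all $v,w\in T\Hyp^2$; equivalently, if one writes $df_z=R_z\circ b_z$ with $R_z$ a linear isometry, then $b=\sqrt{(df)^{*}df}$ where the adjoint is taken with respect to $g_{\Hyp^2}$. This automatically gives self-adjointness of $b$ and shows that $b$ is entirely determined by $f$, so both implications of the proposition reduce to matching the remaining two conditions on $b$ with the two geometric conditions on $f$.

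Next I would address the area-preserving condition. The pullback area form of $f$ equals $(\det b)$ times the hyperbolic area form of $g_{\Hyp^2}$, since $|\det df|=\det b$ when $f$ is orientation-preserving and $b$ is positive-definite. Hence $f$ is area-preserving if and only if $\det b=1$. This also identifies the Lagrangian condition on the graph of $f$ with respect to the symplectic form $\omega_1-\omega_2$ on $\Hyp^2\times\Hyp^2$, where $\omega_i$ are the pullbacks of the hyperbolic area form by the two projections.

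The main obstacle is the last equivalence: assuming $\det b=1$ and $b$ self-adjoint, the graph $\Gamma_f\subset\Hyp^2\times\Hyp^2$ is minimal if and only if $d^\nabla b=0$. To prove this I would work in a local $g_{\Hyp^2}$-orthonormal frame $(e_1,e_2)$ that diagonalizes $b$, so that $b(e_1)=\mu\, e_1$ and $b(e_2)=\mu^{-1}e_2$ for some local function $\mu>0$. The tangent space to $\Gamma_f$ at $(z,f(z))$ is spanned by the vectors $(e_i,df(e_i))$, and a normal frame for the product metric can be obtained by a rotation of angle $\pi/2$ in the second factor, in agreement with the fact that $\Gamma_f$ is Lagrangian for $\omega_1-\omega_2$. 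I would then compute the second fundamental form in these frames, using the Levi-Civita connection of the product metric (which decouples into the Levi-Civita connections of the two factors). The components of $\II$ turn out to involve $\nabla b$ together with algebraic combinations determined by $b$; taking the trace, the condition $H=0$ collapses to
\[
\nabla_{e_1}(b(e_2))-\nabla_{e_2}(b(e_1))-b[e_1,e_2]=0,
\]
which is exactly $(d^\nabla b)(e_1,e_2)=0$. Conversely, the Codazzi-type identity $d^\nabla b=0$ combined with $\det b=1$ forces the two traces of $\II$ to vanish, hence $\Gamma_f$ is minimal.

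The delicate point in the last paragraph is to organize the frame computation so that it genuinely simplifies: one needs to use self-adjointness repeatedly and to exploit the fact that the normal directions to $\Gamma_f$ are obtained by applying the complex structure (coming from $\omega_1-\omega_2$ and the product metric) to the tangent directions, so that the trace of $\II$ against these normals is an antisymmetric pairing of $\nabla b$. Once this is set up, the equivalence is essentially algebraic, and combined with the previous two paragraphs it completes the characterization.
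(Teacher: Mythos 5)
The paper does not prove this proposition at all: it is imported as a known result, cited to \cite{labourieCP} and to \cite[Proposition~1.2.6]{jeremythesis}, and only its consequences (Remark \ref{rmk bundle morphism}, Lemmas \ref{lemma b square root}--\ref{lemma impose codazzi}, Lemma \ref{lemma codazzi rho}) are used. So there is no in-paper argument to compare against; what you have written is a genuine proof sketch of the background result, along the lines of the direct computation in Toulisse's thesis (the cited references also offer an alternative route via harmonic maps: a surface in a Riemannian product is minimal iff both projections are harmonic for the induced conformal structure, and one then identifies the Codazzi condition with the opposite-Hopf-differential condition).

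On its merits, your outline has the right structure and the first two steps are solid: the polar decomposition $df=R\circ b$ with $b=\sqrt{(df)^*df}$ pins down $b$ (up to the sign $b\mapsto -b$, which preserves all three conditions, so one may as well take $b$ positive definite as in Remark \ref{rmk bundle morphism}), and area-preservation is exactly $\det b=1$, which also gives the Lagrangian condition for $\omega_1-\omega_2$. The third step is where the sketch is thinnest, and it is the entire content of the proposition. Two comments. First, the phrase ``a normal frame \ldots obtained by a rotation of angle $\pi/2$ in the second factor'' is not right as stated; the correct normal frame is $(J_1,-J_2)(e_i,df(e_i))$, i.e.\ the compatible complex structure of $\omega_1-\omega_2$ applied to the tangent frame (and verifying that these are normal already uses $\det b=1$) --- you do say this correctly later. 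Second, when you expand $\II$ you get terms $\nabla^f_{e_j}(df(e_k))$ involving the pulled-back connection of the \emph{target}, and these do not reduce to $\nabla b$ by self-adjointness alone: you need the symmetry of $\nabla df$ together with the decomposition $df=Rb$, which yields the identity $(d^\nabla b)(X,Y)=\alpha(Y)JbX-\alpha(X)JbY$ for the connection $1$-form $\alpha$ of the rotation part $R$ (so that, for invertible $b$, the Codazzi condition is equivalent to $R$ being parallel). Feeding this back into the trace of $\II$ does produce exactly $\langle H,N_i\rangle=0\iff (d^\nabla b)(e_1,e_2)^i=0$, so your claimed collapse is correct --- but it is this identity, not repeated use of self-adjointness, that makes the computation close up. A last minor point: the eigenframe of $b$ need not be smooth where the eigenvalues coincide, so either run the computation in an arbitrary orthonormal frame or treat the locus $b=\mathbbm{1}$ separately.
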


\begin{remark} \label{rmk bundle morphism}
Given an orientation-preserving diffeomorphism $f:\Hyp^2\to\Hyp^2$, there is a unique section $b\in\Gamma(\mathrm{End}(T\Hyp^2))$, satisfying $f^*g_{\Hyp^2}=g_{\Hyp^2}(b\cdot,b\cdot)$, which is self-adjoint for $g_{\Hyp^2}$ and positive definite. In fact, if $b$ is self-adjoint, then $f^*g_{\Hyp^2}=g_{\Hyp^2}(b\cdot,b\cdot)=g_{\Hyp^2}(\cdot,b^2\cdot)$ and therefore (in matrix notation), 
\begin{equation} \label{eq square b}
b^2=g_{\Hyp^2}^{-1}\cdot f^*g_{\Hyp^2}~.
\end{equation}
 Such tensor $b^2$ then has two self-adjoint square roots, one positive definite and one negative definite, say $b$ and $-b$. The condition of being minimal Lagrangian thus consist in imposing that one of these sections (and thus also the other) is Codazzi and has unit determinant.
\end{remark}

We can finally state the main results which motivate this paper. First, the existence and uniqueness of the minimal Lagrangian extension, which is due to Bonsante and Schlenker. See also \cite{ksurfaces} for an alternative proof.

\begin{theorem}[\cite{bon_schl}] \label{thm bon schl}
Let $\ph:\RP^1\to\RP^1$ be a quasisymmetric homeomorphism. Then there exists a unique minimal Lagrangian extension $f_\ph:\Hyp^2\to\Hyp^2$, which is quasiconformal.
\end{theorem}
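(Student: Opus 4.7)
My plan is to follow the Anti-de Sitter approach of Bonsante and Schlenker. I would identify three-dimensional Anti-de Sitter space $\AdS^3$ with $\PSL_2(\R)$ equipped with (a multiple of) its Killing form, so that its conformal boundary, with the diagonal removed, is identified with $\partial\Hyp^2\times\partial\Hyp^2$. The graph of $\ph$ then defines a topological circle $\Lambda_\ph$ in $\partial\AdS^3$; since $\ph$ is an orientation-preserving homeomorphism, $\Lambda_\ph$ is acausal, and it bounds a globally hyperbolic \emph{invisible domain} $D(\Lambda_\ph)\subset\AdS^3$. The two natural projections $\Pi_\pm:\AdS^3\to\Hyp^2$ coming from the left and right $\PSL_2(\R)$-actions restrict on any maximal spacelike surface $\Sigma\subset D(\Lambda_\ph)$ to diffeomorphisms onto $\Hyp^2$ whose asymptotic values realize $\ph$ up to normalization; the candidate extension is then $f_\ph:=\Pi_+|_\Sigma\circ(\Pi_-|_\Sigma)^{-1}$. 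The problem thus reduces to producing and controlling such a maximal surface.

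The existence of $\Sigma$ is an asymptotic Plateau problem: one approximates $\Lambda_\ph$ by smooth acausal curves $\Lambda_n$ bounding relatively compact subdomains of $D(\Lambda_\ph)$, solves the Plateau problem for the zero-mean-curvature equation (elliptic in the spacelike regime) to obtain maximal disks $\Sigma_n$, and passes to a limit. The decisive step is a uniform a priori bound on the second fundamental form $B_n$ of $\Sigma_n$, which follows from a Simons-type maximum principle for $|B_n|^2$ on maximal surfaces in $\AdS^3$, combined with barriers near $\partial\AdS^3$ constructed from the quasisymmetric norm $||\ph||_{cr}$ and keeping $\Lambda_\ph$ uniformly spacelike. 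Once $\Sigma$ is obtained, the characterization of Proposition \ref{prop char min lag} applies: the self-adjoint bundle morphism $b$ satisfying $f_\ph^* g_{\Hyp^2}=g_{\Hyp^2}(b\cdot,b\cdot)$ can be written explicitly in terms of $B$ so that $\tr B=0$ (maximality) together with the Gauss equation of $\AdS^3$ yields $\det b=1$, while the Codazzi property of $b$ is inherited from that of $B$. The uniform bound on $|B|$ then translates into a uniform bound on the eigenvalues of $b$, hence on $K(f_\ph)$, proving quasiconformality.

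For uniqueness, any minimal Lagrangian extension $f$ of $\ph$ yields, via Proposition \ref{prop char min lag}, a tensor $b$ from which one reconstructs a maximal surface $\Sigma_f$ inside $D(\Lambda_\ph)$ with asymptotic boundary $\Lambda_\ph$; so it suffices to prove uniqueness of the maximal surface. This in turn follows from a comparison argument: if two distinct such surfaces coexisted in $D(\Lambda_\ph)$, a suitable Lorentzian time-separation function between them would attain a forbidden interior extremum, contradicting the strong maximum principle for the maximal surface equation. The main obstacle is of course the existence step together with the uniform estimate on $|B|$ up to infinity: this is the heart of the Bonsante-Schlenker argument, and it is where the hypothesis $||\ph||_{cr}<+\infty$ enters essentially, through barrier constructions adapted to the asymptotic geometry of $\Lambda_\ph$.
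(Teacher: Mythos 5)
Your outline is essentially the Anti-de Sitter argument of Bonsante and Schlenker that the paper relies on: the present article states Theorem \ref{thm bon schl} as a citation of \cite{bon_schl} and gives no proof of its own, and your reduction to the asymptotic Plateau problem for maximal surfaces in $\AdS^3$, the uniform bound on the second fundamental form via the quasisymmetry of $\ph$, the passage from the shape operator $B$ to the Codazzi tensor $b$ with $\det b=1$, and the maximum-principle uniqueness are exactly the steps of that reference. So the approach matches the intended (cited) proof; no discrepancy to report.
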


We observe that, if $\ph$ is in $\PSL_2(\R)$, then the minimal Lagrangian extension $f_\ph$ is the natural extension, defined by the action of the same element of $\PSL_2(\R)$ on $\Hyp^2$. (In fact, in this case $f_\ph$ is an isometry for $g_{\Hyp^2}$, and therefore one can take $b$ as the identity section in Definition \ref{defi min lag}.)
The maximal dilatation of the minimal Lagrangian extension was studied in \cite{seppimaximal}.

\begin{theorem}[\cite{seppimaximal}] \label{thm seppi}
There exists a universal constant $C>0$ such that
$$\log K(f_\ph)\leq C||\ph||_{cr}~,$$
for any quasisymmetric homeomorphism $\ph:\RP^1\to\RP^1$.
\end{theorem}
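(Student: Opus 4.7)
The plan is to use Anti-de Sitter three-dimensional geometry. Given a quasisymmetric $\ph$, its graph in $\RP^1\times\RP^1$ is identified with a weakly spacelike achronal curve $\Lambda_\ph$ in $\partial_\infty\AdS^3$. By the Bonsante--Schlenker framework, there exists a unique maximal (i.e.\ zero mean curvature) spacelike surface $\Sigma_\ph$ in $\AdS^3$ with asymptotic boundary $\Lambda_\ph$. The two natural projections $\pi_L,\pi_R\colon\Sigma_\ph\to\Hyp^2$ are diffeomorphisms, and the minimal Lagrangian extension is recovered as $f_\ph=\pi_R\circ\pi_L^{-1}$.

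My first step would be to compute the tensor $b$ of Proposition \ref{prop char min lag} in terms of the shape operator $B$ of $\Sigma_\ph$. Since $\Sigma_\ph$ is maximal, $B$ is traceless self-adjoint, so at each point its eigenvalues are $\pm\lambda$ for some $0\leq\lambda<1$. A computation going back to Mess shows that the pull-back $f_\ph^*g_{\Hyp^2}$, transported to $\Sigma_\ph$, equals $I((\id-JB)(\id+JB)^{-1}\cdot,(\id-JB)(\id+JB)^{-1}\cdot)$, where $I$ is the induced metric and $J$ the compatible complex structure. A direct eigenvalue computation then yields the pointwise identity for the local dilatation
$$K(f_\ph,\pi_L(p))=\frac{1+\lambda(p)}{1-\lambda(p)},$$
so that $\log K(f_\ph)=2\sup_{p\in\Sigma_\ph}\arctanh\lambda(p)$.

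The second step, which I expect to be the main obstacle, is to show that $\sup_p \arctanh\lambda(p)$ is controlled by $||\ph||_{cr}$. Geometrically, $\arctanh\lambda(p)$ admits a natural interpretation as a timelike distance in $\AdS^3$ measured along the principal directions of $\Sigma_\ph$, and is closely related to a ``half-width'' of the convex hull of $\Lambda_\ph$. The heart of the proof is then the uniform estimate
$$\sup_{p\in\Sigma_\ph}\arctanh\lambda(p)\leq C'\,||\ph||_{cr},$$
which I would attack by producing explicit spacelike support planes along the convex hull, bounding the associated AdS distances in terms of the cross-ratio distortion of $\ph$, and then combining this boundary control with a maximum-principle argument applied to the Codazzi--Mainardi equation satisfied by $B$ to propagate the bound into the interior of $\Sigma_\ph$. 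Together with the first step, this gives the theorem with $C=2C'$.
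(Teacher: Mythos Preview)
The paper does not contain a proof of this statement: Theorem~\ref{thm seppi} is stated in the preliminaries as a result quoted from \cite{seppimaximal}, and no argument for it is given here. So there is nothing in the present paper to compare your proposal against line by line.

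That said, your outline is broadly consistent with what the paper itself says about the original proof: in the discussion of techniques it is explicitly noted that the inequality is obtained via three-dimensional Anti-de Sitter geometry and the study of maximal surfaces, and that the methods of \cite{seppimaximal} give no control on the constant $C$. Your first step (identifying $f_\ph$ with the composition of the left and right projections from the maximal surface in $\AdS^3$, and expressing the pointwise dilatation as $(1+\lambda)/(1-\lambda)$ in terms of the principal curvatures $\pm\lambda$) is the standard and correct translation.

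The second step is where your proposal remains a genuine sketch rather than a proof. The key analytic input in \cite{seppimaximal} is a uniform bound on the principal curvatures of the maximal surface in terms of the width of the convex hull of $\Lambda_\ph$, together with a separate bound relating that width to $||\ph||_{cr}$. Your plan to ``produce explicit spacelike support planes'' and then ``propagate the bound into the interior'' by a maximum-principle argument on the Codazzi equation is in the right spirit, but you should be aware that the actual argument requires more than Codazzi: one needs the Gauss equation for the maximal surface and a Schauder-type or barrier estimate to control $\lambda$ by the width, and this is precisely the step that makes the resulting constant non-explicit. As written, your second step names the right objects but does not indicate how the interior control is actually achieved.
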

In other words, Theorem \ref{thm seppi} can be stated as:
$$\sup_{||\ph||_{cr}\neq 0}\frac{\log K(f_\ph)}{||\ph||_{cr}}<+\infty~.$$
The constant $C$ obtained in \cite{seppimaximal} is not explicit, though.
Moreover, in the same paper, the following (partial) converse inequality was obtained:
\begin{theorem}[\cite{seppimaximal}] \label{thm seppi 2}
For every $\epsilon>0$ there exists $\delta>0$ such that, if $\ph:\RP^1\to\RP^1$ is quasisymmetric with $||\ph||_{cr}\leq \delta$, then
$$\log K(f_\ph)\geq \left(\frac{1}{2}-\epsilon\right)||\ph||_{cr}~.$$
\end{theorem}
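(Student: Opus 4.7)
The plan is to combine the characterization of Proposition \ref{prop char min lag} with the well-known Anti-de Sitter realization of $f_\ph$: every minimal Lagrangian extension arises as the composition $\pi_r\circ\pi_\ell^{-1}$ of the left and right hyperbolic projections of a unique maximal spacelike surface $\Sigma\subset\AdS^3$ whose asymptotic boundary in $\partial_\infty\AdS^3\cong\RP^1\times\RP^1$ coincides with the graph of $\ph$. Under this correspondence, the Codazzi tensor $b$ of Proposition \ref{prop char min lag} is an explicit algebraic expression in the shape operator $B$ of $\Sigma$, so a pointwise lower bound on $\log K(f_\ph)$ reduces to a pointwise lower bound on $\|B\|$ at a suitably chosen point of $\Sigma$.

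Given $\epsilon>0$, the first step is to fix $\eta>0$ (small, depending on $\epsilon$) and pick a quadruple $Q$ with $cr(Q)=-1$ such that $\left|\log\left|cr(\ph(Q))\right|\right|\geq\|\ph\|_{cr}-\eta$. Using that both the maximal dilatation of $f_\ph$ and the cross-ratio norm of $\ph$ are invariant under pre- and post-composition by isometries (compatibly with the minimal Lagrangian property), I would normalize $Q=(-1,0,1,\infty)$ and post-compose so that $\ph(Q)=(-1,0,e^\mu,\infty)$ with $|\mu|=\|\ph\|_{cr}-\eta$. Let $p=i\in\Hyp^2$ be the orthogonal intersection of the two geodesics of $Q$ and let $\tilde p\in\Sigma$ be its lift. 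The second step exploits the smallness of $\|\ph\|_{cr}$: in this regime $\Sigma$ is close to a totally geodesic spacelike plane and $B$ is small in $L^\infty$. Linearizing the maximal surface equation together with the asymptotic boundary condition yields a linear elliptic boundary value problem whose solution is given by a Poisson-type integral transform of the boundary data; quantitatively lower-bounding this transform at $\tilde p$, when applied to the normalized boundary data, should produce $\|B(\tilde p)\|\geq c\,\|\ph\|_{cr}$ for an explicit constant $c$ close to $1/4$. Propagating this through the algebraic $b\leftrightarrow B$ correspondence and the identity relating the pointwise dilatation of $f_\ph$ to the eigenvalues of $b$ gives $\log K(f_\ph)\geq(1/2-\epsilon)\|\ph\|_{cr}$ as long as $\|\ph\|_{cr}\leq\delta=\delta(\epsilon)$ for a suitable $\delta$.

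The hard part is to pin down the precise leading constant $1/2$ through this chain of identifications: each step (shape operator $\leftrightarrow$ Codazzi tensor, surface data $\leftrightarrow$ boundary cross-ratio, pointwise dilatation $\leftrightarrow$ eigenvalues of $b$) contributes a multiplicative factor that must be tracked carefully, and even the explicit form of the linearized Poisson-type kernel at $\tilde p$ is nontrivial. A compactness-and-contradiction scheme might bypass some of this explicit computation: assuming the inequality fails for a sequence $\ph_n$ with $\|\ph_n\|_{cr}\to 0$, rescale by $\|\ph_n\|_{cr}$ and extract a subsequential limit. The limit would be a nontrivial solution of the linearized maximal surface equation whose boundary cross-ratio distortion violates the asserted bound, which one would rule out by a direct and more tractable analysis of the linearization at the totally geodesic plane.
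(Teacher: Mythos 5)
First, a point of order: the paper does not prove Theorem \ref{thm seppi 2} at all --- it is quoted verbatim from \cite{seppimaximal}, so there is no internal proof to compare against. Your proposal does aim at the right framework (the one actually used in \cite{seppimaximal}): the minimal Lagrangian extension as $\pi_r\circ\pi_\ell^{-1}$ for the maximal surface $\Sigma\subset\AdS^3$ spanning the graph of $\ph$, the algebraic dictionary between the Codazzi tensor $b$ and the shape operator $B$, and the resulting identity $\log K(f_\ph)=2\sup\arctanh\lambda$ where $\pm\lambda$ are the principal curvatures; your bookkeeping of the constants ($c$ near $1/4$ for $\|B\|$ translating into $1/2$ for $\log K$) is at least self-consistent with that dictionary.

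However, there is a genuine gap: the entire quantitative content of the theorem is concentrated in the step you phrase as ``should produce $\|B(\tilde p)\|\geq c\,||\ph||_{cr}$,'' and nothing in the proposal establishes it. The difficulty is not merely tracking factors. To lower-bound $B$ at the lift of the intersection point of the geodesics of $Q$, you must first convert the single scalar $\log|cr(\ph(Q))|$ into boundary data for the linearized maximal surface equation (a function on $\RP^1$, or in \cite{seppimaximal} the width of the convex hull of the graph of $\ph$), then justify that the linearization is valid uniformly as $||\ph||_{cr}\to 0$ with errors that are $o(||\ph||_{cr})$, and finally compute the sharp constant of the resulting Poisson-type transform --- none of which is sketched. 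The compactness-and-contradiction alternative does not bypass this: the rescaled limits $\ph_n/||\ph_n||_{cr}$ live in the Zygmund/tangent space of the universal Teichm\"uller space and need a topology in which both the cross-ratio functional and the linearized curvature pass to the limit; moreover the near-extremal quadruples $Q_n$ may escape to infinity, so extracting a nontrivial limit requires a renormalization you have not specified, and the ``direct and more tractable analysis of the linearization'' is again exactly the sharp-constant computation you set out to avoid. As it stands the argument is a plausible research plan consistent with \cite{seppimaximal}, not a proof.
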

\noindent Theorem \ref{thm seppi 2} can be reformulated as
\begin{equation} \label{eq seppi 2}
\liminf_{||\ph||_{cr}\to 0}\frac{\log K(f_\ph)}{||\ph||_{cr}}\geq \frac{1}{2}~.
\end{equation}

\section{Extending simple earthquake maps} \label{sec earth simple}

In this section we will construct a first one-parameter family of minimal Lagrangian maps $\f_c:\Hyp^2\to\Hyp^2$, depending on a parameter  $c\in[0,1)$, and study its behavior.

\subsection*{The band model} It will be convenient to introduce a new coordinate system on $\Hyp^2$, which is the \emph{band model} of $\Hyp^2$. We use $z=x+iy$ to denote points of $\Hyp^2$, and introduce the notation $w=u+iv$. Then consider the global coordinate $\varsigma:\Omega\to\Hyp^2$, where
$$\Omega=\left\{w=u+iv\,:\,|u|<\frac{\pi}{2}\right\}~,$$
defined by
$$\varsigma:w\mapsto z=i\exp(-iw)~.$$
Observe that $\varsigma$ extends to each component $\{u=-\pi/2\}$ and $\{u=\pi/2\}$ of $\partial\Omega$, which are mapped to the two portions $\{x>0\}$ and $\{x<0\}$ of $\RP^1$ by means of 
$$\varsigma\left(-\frac{\pi}{2}+iv\right)=-e^v\qquad \varsigma\left(\frac{\pi}{2}+iv\right)=e^v~.$$
In the $w$-coordinates, the metric of $\Hyp^2$ takes the form:
$$\varsigma^*g_{\Hyp^2}=\frac{du^2+dv^2}{\cos^2(u)}~.$$
Let us thus remark that this is a conformal model of $\Hyp^2$, for which the vertical line $\ell=\{u=0\}$ is a geodesic with arclength parameter $v$. The endpoints of $\ell$ are at infinity and correspond to $0$ and $\infty$ in the upper half-plane model. In fact, the hyperbolic translations along $\ell$, with (signed) translation distance $\delta$, have the form 
$$T_\delta(w)= w+i\delta~.$$
See also \cite[Chapter 2]{hubbardbook}.

\subsection*{Ansatz for minimal Lagrangian diffeomorphisms} For convenience, let us use the notation 
$$w=(u,v)\in \left(-\frac{\pi}{2},\frac{\pi}{2}\right)\times \R$$
for the band model of $\Hyp^2$, instead of the complex notation $w=u+iv$. In this section we will consider diffeomorphisms $\f:\Hyp^2\to\Hyp^2$ of the following form:
\begin{equation} \label{eq ansatz 1}
\f(u,v)=(u,v+h(u))~.
\end{equation}
We will derive conditions on $h$ which correspond to the fact that $\f$ is minimal Lagrangian, using the characterization of Proposition \ref{prop char min lag}, and study the solutions of the corresponding ODE. As observed in Remark \ref{rmk bundle morphism}, the section $b\in\Gamma(\mathrm{End}(T\Hyp^2))$ is essentially determined by the map $\f$, and $\f$ being minimal Lagrangian corresponds to imposing $\det b=1$ and $d^\nabla b=0$. We will see below that the condition $\det b=1$ will be easily satisfied, while the most complicated condition is the Codazzi condition $d^\nabla b=0$.

\begin{remark}
The diffeomorphism of the form of Equation \eqref{eq ansatz 1} is invariant for the 1-parameter group of isometries $\{T_\delta\,:\,\delta\in\R\}$, that is, 
$$\f\circ T_\delta=T_\delta\circ \f~,$$
for every $\delta\in \R$.
A diffeomorphism with this invariance can have the more general form
\begin{equation} \label{eq ansatz 1 alternative}
\f(u,v)=(g(u),v+h(u))~.
\end{equation}
Such maps were considered, in a slightly different context, in \cite[Section 6]{ksurfaces}. There it was showed that, when taking $\f$ of the form \eqref{eq ansatz 1 alternative}, if $b$ is the unique positive definite self-adjoint bundle morphism with $f^*g_{\Hyp^2}=g_{\Hyp^2}(b\cdot,b\cdot)$ (see Remark \ref{rmk bundle morphism}), the condition $\det b=1$ is equivalent to the following ODE on $g$:
$$\frac{g'(u)}{\cos^2(g(u))}=\pm\frac{1}{\cos^2(u)}~,$$
 which is solved by $\tan(g(u))=\pm\tan u+C$. Since in this work we are looking for global diffeomorphisms, we impose $C=0$, so that $g$ is defined on the entire interval $(-\pi/2,\pi/2)$, and we take the positive sign since we require $\f$ to be orientation-preserving. For this reason, we only consider $g(u)=u$ here, as in Equation \eqref{eq ansatz 1}.
\end{remark}

Let us now compute the self-adjoint, positive definite section $b\in\Gamma(\mathrm{End}(T\Hyp^2))$, as explained in Remark \ref{rmk bundle morphism}. 

\begin{lemma} \label{lemma b square root}
Let $\f:\Hyp^2\to\Hyp^2$ be an orientation-preserving diffeomorphism  of the form (in the band model)
$$\f(u,v)=(u,v+h(u))~.$$
Then $\f^*g_{\Hyp^2}=g_{\Hyp^2}(b\cdot,b\cdot)$ where
$$b=\frac{1}{\sqrt{4+h'(u)^2}}\begin{pmatrix} 2+h'(u)^2 & h'(u) \\ h'(u) & 2 \end{pmatrix}$$
is self-adjoint, positive definite, and $\det b=1$.
\end{lemma}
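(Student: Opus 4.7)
The plan is a direct computation in the band-model coordinates $(u,v)$, carried out in three short steps. First, I would compute the pullback $\f^*g_{\Hyp^2}$. Since the first coordinate of $\f(u,v)=(u,v+h(u))$ coincides with $u$, the conformal factor $1/\cos^2(\cdot)$ at the image equals $1/\cos^2(u)$, and the Jacobian matrix of $\f$ is
$$d\f = \begin{pmatrix} 1 & 0 \\ h'(u) & 1 \end{pmatrix}.$$
The differentials of the image coordinates $u'=u$ and $v'=v+h(u)$ are $du'=du$ and $dv'=h'(u)\,du+dv$, so a direct substitution yields
$$\f^* g_{\Hyp^2} = \frac{1}{\cos^2 u}\bigl[(1+h'(u)^2)\,du^2 + 2h'(u)\,du\,dv + dv^2\bigr],$$
which in matrix form is $\tfrac{1}{\cos^2 u}\begin{pmatrix} 1+h'(u)^2 & h'(u) \\ h'(u) & 1 \end{pmatrix}$.

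Second, since in these coordinates $g_{\Hyp^2}$ is $1/\cos^2(u)$ times the identity matrix, the inverse $g_{\Hyp^2}^{-1}$ is $\cos^2(u)$ times the identity, and the conformal factor cancels in the product $g_{\Hyp^2}^{-1}\cdot \f^* g_{\Hyp^2}$. Applying formula \eqref{eq square b} of Remark \ref{rmk bundle morphism}, this gives
$$b^2 = \begin{pmatrix} 1+h'(u)^2 & h'(u) \\ h'(u) & 1 \end{pmatrix}.$$
The required property that $b$ is $g_{\Hyp^2}$-self-adjoint reduces, for the same conformality reason, to $b$ being symmetric as a $2\times 2$ matrix, which is immediate from the formula in the statement.

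Third, I would verify by direct computation that the proposed matrix squares to the matrix above. Writing $t:=h'(u)$ to lighten notation, one needs to check
$$(2+t^2)^2 + t^2 = (1+t^2)(4+t^2), \qquad (2+t^2)\,t + 2t = t(4+t^2),$$
together with the trivial identity $t^2+4=4+t^2$ for the lower-right entry; all three are elementary. Finally, $\det b = \tfrac{1}{4+t^2}\bigl((2+t^2)\cdot 2 - t^2\bigr) = 1$, and positive definiteness follows from $\det b>0$ combined with positivity of the diagonal entries $2+t^2$ and $2$. I do not anticipate any serious obstacle: the key observation is that the ansatz preserves the $u$-coordinate, so the conformal factors at source and target cancel, reducing the claim to a purely algebraic identity in $t=h'(u)$.
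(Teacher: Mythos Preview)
Your proof is correct and follows essentially the same approach as the paper: compute the pullback metric, read off $b^2$ via \eqref{eq square b}, and then identify the positive square root. The only minor difference is that the paper \emph{derives} the formula for $b$ using Cayley--Hamilton (from $b^2-(\tr b)b+(\det b)\mathbbm 1=0$ with $\det b=1$ one gets $b=(b^2+\mathbbm 1)/\tr b$ and $\tr b=\sqrt{4+h'(u)^2}$), whereas you simply verify that the stated matrix squares to $b^2$; since the formula is given in the statement, your verification is perfectly adequate.
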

\begin{proof}
Let us first compute the pull-back metric, in the band model:
\begin{align*}
\f^*g_{\Hyp^2}=&\frac{1}{\cos^2(u)}\left(du^2+(dv+h'(u)du)^2\right) \\
=&\frac{(1+h'(u)^2)du^2+2h'(u)dudv+dv^2}{\cos^2(u)}\,.
\end{align*}
Using Equation \eqref{eq square b}, and recalling that in these coordinates $g_{\Hyp^2}=(1/\cos^2(u))(du^2+dv^2)$, we then have:
$$b^2=\begin{pmatrix} 1+h'(u)^2 & h'(u) \\ h'(u) & 1 \end{pmatrix}~.$$
Hence $\det b^2=1$. To compute $b$, self-adjoint for $g_{\Hyp^2}$ and positive definite, we observe from the Cayley-Hamilton theorem that $b^2-(\tr b)b+(\det b)\mathbbm 1=0$. Since we are looking for $b$ with $\det b=1$, we deduce $(\tr b)b=b^2+\mathbbm 1$. By taking the trace we obtain $(\tr b)^2=\tr b^2+2$ and therefore $\tr b=\sqrt{2+\tr b^2}=\sqrt{4+h'(u)^2}$. (Indeed $b$ is supposed positive definite and thus $\tr b>0$.) This enables us to conclude
$$b=\frac{1}{\sqrt{4+h'(u)^2}}(b^2+\mathbbm 1)=\frac{1}{\sqrt{4+h'(u)^2}}\begin{pmatrix} 2+h'(u)^2 & h'(u) \\ h'(u) & 2 \end{pmatrix}~,$$
as claimed.
\end{proof}

\subsection*{Imposing the Codazzi condition}
We are thus left to impose the condition $d^\nabla b=0$, which will result in an ordinary differential equation for $h$. 

\begin{lemma} \label{lemma impose codazzi}
Let $\f:\Hyp^2\to\Hyp^2$ be an orientation-preserving diffeomorphism  of the form (in the band model)
$$\f(u,v)=(u,v+h(u))~,$$
and let $\f^*g_{\Hyp^2}=g_{\Hyp^2}(b\cdot,b\cdot)$ where $b$ is 
self-adjoint, positive definite and $\det b=1$, as in Lemma \ref{lemma b square root}. Then $d^\nabla b=0$ if and only if $h$ satisfies
\begin{equation} \label{eq impose codazzi}
\tan(u)=-\frac{2h''(u)}{(4+h'(u)^2)h'(u)}~.
\end{equation}
\end{lemma}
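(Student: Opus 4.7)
My plan is to verify the Codazzi identity $d^\nabla b = 0$ by a direct coordinate computation, applied to the pair $(\partial_u,\partial_v)$ in the band model. Since $[\partial_u,\partial_v]=0$, this reduces to showing that
\[
\nabla_{\partial_u}\bigl(b(\partial_v)\bigr)=\nabla_{\partial_v}\bigl(b(\partial_u)\bigr).
\]

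First I would record the Christoffel symbols of the conformally flat metric $g_{\Hyp^2}=e^{2\phi}(du^2+dv^2)$ with $\phi=-\log\cos(u)$. Since $\partial_u\phi=\tan(u)$ and $\partial_v\phi=0$, the standard conformal formula $\Gamma^k_{ij}=\delta^k_i\partial_j\phi+\delta^k_j\partial_i\phi-\delta_{ij}\partial^k\phi$ gives the non-vanishing symbols
\[
\Gamma^u_{uu}=\tan(u),\qquad \Gamma^u_{vv}=-\tan(u),\qquad \Gamma^v_{uv}=\Gamma^v_{vu}=\tan(u).
\]
In particular $\nabla_{\partial_u}\partial_u=\tan(u)\partial_u$, $\nabla_{\partial_u}\partial_v=\nabla_{\partial_v}\partial_u=\tan(u)\partial_v$, and $\nabla_{\partial_v}\partial_v=-\tan(u)\partial_u$.

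Next, using Lemma \ref{lemma b square root}, I write $b(\partial_u)=C\partial_u+A\partial_v$ and $b(\partial_v)=A\partial_u+B\partial_v$ with coefficients
\[
A=\frac{h'(u)}{\sqrt{4+h'(u)^2}},\qquad B=\frac{2}{\sqrt{4+h'(u)^2}},\qquad C=\frac{2+h'(u)^2}{\sqrt{4+h'(u)^2}},
\]
which depend only on $u$. A direct expansion gives
\[
\nabla_{\partial_u}(b(\partial_v))=\bigl(A'+A\tan u\bigr)\partial_u+\bigl(B'+B\tan u\bigr)\partial_v,
\]
\[
\nabla_{\partial_v}(b(\partial_u))=-A\tan(u)\,\partial_u+C\tan(u)\,\partial_v.
\]
Hence $d^\nabla b(\partial_u,\partial_v)=0$ is equivalent to the pair of scalar equations
\[
A'+2A\tan(u)=0\qquad\text{and}\qquad B'+(B-C)\tan(u)=0.
\]

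Finally, I would compute $A'=4h''/(4+h'^2)^{3/2}$, $B'=-2h'h''/(4+h'^2)^{3/2}$, and note $B-C=-h'^2/\sqrt{4+h'^2}$. Substituting into either equation and clearing denominators by $\sqrt{4+h'^2}$ yields the same relation
\[
\frac{2h''(u)}{4+h'(u)^2}+h'(u)\tan(u)=0,
\]
which is exactly Equation \eqref{eq impose codazzi} after rearrangement. Thus both scalar equations coincide, as one expects since $d^\nabla b$ is a $2$-form with values in a rank-$2$ bundle satisfying the algebraic constraint coming from $b$ being $g_{\Hyp^2}$-self-adjoint with unit determinant. The only mildly delicate point is to keep track of the signs in the conformal Christoffel symbols (in particular the $-\tan(u)$ in $\nabla_{\partial_v}\partial_v$), which is what produces the crucial contribution $-A\tan(u)$ in the $u$-component above and ensures that both scalar equations reduce to the same ODE.
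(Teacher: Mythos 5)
Your proof is correct and follows essentially the same route as the paper: compute the Christoffel symbols of the band-model metric, expand $\nabla_{\partial_u}(b(\partial_v))$ and $\nabla_{\partial_v}(b(\partial_u))$, and check that both the $\partial_u$- and $\partial_v$-components of the resulting equation reduce to the single ODE \eqref{eq impose codazzi}. The only (cosmetic) differences are that you obtain the Christoffel symbols from the general conformal-factor formula and organize the computation via the coefficients $A,B,C$, which makes the bookkeeping slightly cleaner but does not change the argument.
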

\begin{proof}
First, let us remark that $d^\nabla b$ is a skew-symmetric 2-form, hence in $(u,v)$-coordinates, $d^\nabla b(\partial_u,\partial_u)=d^\nabla b(\partial_v,\partial_v)=0$ and $d^\nabla b(\partial_u,\partial_v)=-d^\nabla b(\partial_v,\partial_u)$. Moreover $[\partial_u,\partial_v]=0$. It thus suffices to check 
$$d^\nabla b(\partial_u,\partial_v)=\nabla_{\partial_u}b(\partial_v)-\nabla_{\partial_v}b(\partial_u)=0~.$$
Let us now compute:
\begin{align*}
\nabla_{\partial_v}b(\partial_u)&=\nabla_{\partial_v}\left(\frac{1}{\sqrt{4+h'(u)^2}}((2+h'(u)^2)\partial_u)+h'(u)\partial_v\right) \\
&=\frac{1}{\sqrt{4+h'(u)^2}}\left((2+h'(u)^2)\nabla_{\partial_v}\partial_u+h'(u)\nabla_{\partial_v}\partial_v\right) \\
&=\frac{1}{\sqrt{4+h'(u)^2}}\left((2+h'(u)^2)\tan(u)\partial_v-h'(u)\tan(u)\partial_u\right) \\
&=\frac{\tan(u)}{\sqrt{4+h'(u)^2}}\left(-h'(u)\partial_u+(2+h'(u)^2)\partial_v\right)~.
\end{align*}
From the second to the third line we have used the expressions $$\nabla_{\partial_v}\partial_u=\Gamma_{vu}^u\partial_u+\Gamma_{vu}^v\partial_v=\tan(u)\partial_v$$ and 
$$\nabla_{\partial_v}\partial_v=\Gamma_{vv}^u\partial_u+\Gamma_{vv}^v\partial_v=-\tan(u)\partial_u~,$$ which arise from a direct computation of the Christoffel symbols: it turns out indeed that $\Gamma_{vu}^u=\Gamma_{vv}^v=0$, $\Gamma_{vu}^v=\tan(u)$ and $\Gamma_{vv}^u=-\tan(u)$.

On the other hand, let us compute:
\begin{align*}
\nabla_{\partial_u}b(\partial_v)=&\nabla_{\partial_u}\left(\frac{1}{\sqrt{4+h'(u)^2}}(h'(u)\partial_u+2\partial_v)\right) \\
=&-\frac{h'(u)h''(u)}{(4+h'(u)^2)^{3/2}}(h'(u)\partial_u+2\partial_v) \\
&+\frac{1}{\sqrt{4+h'(u)^2}}(h''(u)\partial_u+h'(u)\nabla_{\partial_u}\partial_u+2\nabla_{\partial_u}\partial_v) \\
=& \left(-\frac{h'(u)^2h''(u)}{(4+h'(u)^2)^{3/2}}+\frac{h'(u)\tan(u)+h''(u)}{\sqrt{4+h'(u)^2}}\right)\partial_u \\
&+\left(-\frac{2h'(u)h''(u)}{(4+h'(u)^2)^{3/2}}+\frac{2\tan(u)}{\sqrt{4+h'(u)^2}}\right)\partial_v~,
\end{align*}
where here we have used 
$$\nabla_{\partial_u}\partial_u=\Gamma_{uu}^u\partial_u+\Gamma_{uu}^v\partial_v=\tan(u)\partial_u$$
since $\Gamma_{uu}^u=\tan(u)$ and $\Gamma_{uu}^v=0$, 
 and $$\nabla_{\partial_u}\partial_v=\nabla_{\partial_v}\partial_u=\tan(u)\partial_v~.$$ Let us now equate $\nabla_{\partial_v}b(\partial_u)$ and $\nabla_{\partial_u}b(\partial_v)$. 
From the $\partial_u$ component, we obtain:
$$
\frac{h'(u)^2h''(u)}{(4+h'(u)^2)^{3/2}}=\frac{2h'(u)\tan(u)+h''(u)}{\sqrt{4+h'(u)^2}}~,
$$
which leads after some manipulation to:
$$
\tan(u)=-\frac{2h''(u)}{(4+h'(u)^2)h'(u)}~,
$$
which is precisely Equation \eqref{eq impose codazzi}.
Equating the $\partial_v$ components instead, one gets to:
$$
\frac{2h'(u)h''(u)}{(4+h'(u)^2)^{3/2}}=-\frac{h'(u)\tan(u)}{\sqrt{4+h'(u)^2}}~,
$$
which leads again to Equation \eqref{eq impose codazzi}. Hence $d^\nabla b=0$ if and only if Equation \eqref{eq impose codazzi}
 is satisfied.
 \end{proof}

\subsection*{Solutions of ODE and definition of the diffeomorphisms $\f_k$}

We thus need to study solutions of Equation \eqref{eq impose codazzi}. This is not difficult to integrate, since 
$$\int\tan(u)du=-\log\cos(u)+c_1$$ and $$\int\frac{2h''(u)}{(4+h'(u)^2)h'(u)}du=\frac{1}{2}\log h'(u)-\frac{1}{4}\log(4+h'(u)^2)+c_2~,$$
and therefore we obtain (put $c=-c_1-c_2$):
$$e^{4c}\cos^4(u)=\frac{h'(u)^2}{4+h'(u)^2}~.$$
After some manipulation, and replacing the constant $e^{c}$ by $k$, we obtain
\begin{equation} \label{eq hk'}
h'(u)=\pm\frac{2k^2\cos^2(u)}{\sqrt{1-k^4\cos^4(u)}}~.
\end{equation}

Composing with a hyperbolic translation $T_\delta$, which has the form $T_\delta(u,v)=(u,v+\delta)$, we can assume $h(-\pi/2)=0$. Hence we will consider the following family of minimal Lagrangian diffeomorphisms (corresponding to the choice of the positive sign in Equation \eqref{eq hk'}):
\begin{defi}
For every $k\in[0,1)$, let 
$$h_k(u)=\int_{-\pi/2}^u \frac{2k^2\cos^2(\sigma)}{\sqrt{1-k^4\cos^4(\sigma)}}d\sigma~.$$
Then define, in the band model of $\Hyp^2$,
 $$\f_k(u,v)=(u,v+h_k(u))~.$$
\end{defi}

The diffeomorphisms $\f_k$ obtained in this way
(recall the ansatz of Equation \eqref{eq ansatz 1})
are minimal Lagrangian, since $h_k$ is a solution of Equation \eqref{eq impose codazzi} in Lemma \ref{lemma impose codazzi}.
They depend on the parameter $k$. We only consider $k<1$ so that $h_k$ is defined for all $u\in(-\pi/2,\pi/2)$. Moreover, we also allowed $k=0$, which corresponds to $h_0'\equiv 0$ and thus $h_0\equiv 0$: the diffeomorphism $\f_0$ is the identity.

\subsection*{Computing the cross-ratio norm}
Let us first express the boundary value of the diffeomorphisms $\f_k$ we have just defined, and find its cross-ratio norm.

\begin{lemma}
The minimal Lagrangian diffeomorphisms $\f_k:\Hyp^2\to \Hyp^2$ extend to $\RP^1$, with boundary value (in the upper half-plane model) the homeomorphism: 
$$\phi_k(x)=\begin{cases} x & \textrm{if }x\leq 0 \\ e^{\lambda_k} x & \textrm{if }x> 0 \end{cases}~,$$
where 
\begin{equation} \label{eq lambda integral}
\lambda_k=\int_{-\pi/2}^{\pi/2} \frac{2k^2\cos^2(\sigma)}{\sqrt{1-k^4\cos^4(\sigma)}}d\sigma~.
\end{equation}
\end{lemma}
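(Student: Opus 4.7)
The plan is to use the explicit correspondence between the band model and the upper half-plane to push the effect of $\f_k$ down to the two boundary arcs of $\partial\Omega$, then check continuity at the remaining two ideal points $0,\infty\in\RP^1$. Since $\f_k$ preserves the $u$-coordinate, it restricts to a well-defined map of each vertical boundary line $\{u=\pm\pi/2\}$, and on each of these lines the coordinate change $\varsigma$ is already given in closed form in the preliminaries.

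First I would integrate $h_k'$ on the two boundary segments. By definition $h_k(-\pi/2)=0$, so $\f_k$ extends to the component $\{u=-\pi/2\}$ as $(-\pi/2,v)\mapsto(-\pi/2,v)$; under $\varsigma(-\pi/2+iv)=-e^v$, this is the identity on the arc $\{x<0\}\subset\RP^1$, matching the first branch of $\phi_k$. On the other side, $h_k(\pi/2)$ equals the full integral $\lambda_k$ of \eqref{eq lambda integral}, so $\f_k$ extends to $\{u=\pi/2\}$ as $(\pi/2,v)\mapsto(\pi/2,v+\lambda_k)$; under $\varsigma(\pi/2+iv)=e^v$, this becomes $x=e^v\mapsto e^{v+\lambda_k}=e^{\lambda_k}x$ on $\{x>0\}$, matching the second branch of $\phi_k$. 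Note that $\lambda_k$ is finite for every $k\in[0,1)$, because on $[-\pi/2,\pi/2]$ one has $1-k^4\cos^4\sigma\geq 1-k^4>0$, so the integrand in \eqref{eq lambda integral} is bounded.

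It remains to verify that $\f_k$ extends continuously to $\RP^1$ across the two ideal points $0$ and $\infty$, which correspond in the band model to $v\to-\infty$ and $v\to+\infty$ respectively. Since $h_k$ is monotone and takes values in the bounded interval $[0,\lambda_k]$, the map $(u,v)\mapsto(u,v+h_k(u))$ sends any sequence with $v\to\pm\infty$ to a sequence with $v+h_k(u)\to\pm\infty$. In upper half-plane coordinates $(x,y)=(e^v\sin u,e^v\cos u)$ this means $|z|\to+\infty$ (respectively $|z|\to 0$), so $\f_k$ indeed extends continuously to $\RP^1$ sending $\infty\mapsto\infty$ and $0\mapsto 0$, which is the common value of $\phi_k$ on the two branches at the endpoints.

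There is no serious obstacle: all three claims (boundary value on each arc, finiteness of $\lambda_k$, and continuity at $0$ and $\infty$) follow from the fact that $h_k$ is a bounded, continuous, monotone primitive on the closed interval $[-\pi/2,\pi/2]$. The only place one must be careful is the limit at the corners $v\to\pm\infty$, where the argument relies crucially on the boundedness of $h_k$, itself a consequence of the hypothesis $k<1$.
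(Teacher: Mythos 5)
Your proposal is correct and follows essentially the same route as the paper: read off the boundary behaviour on the two components $\{u=\pm\pi/2\}$ of $\partial\Omega$ from $h_k(-\pi/2)=0$ and $h_k(\pi/2)=\lambda_k$, and translate via $\varsigma(\pm\pi/2+iv)=\pm e^v$ into the upper half-plane. The extra verification of continuity at the ideal points $0$ and $\infty$ (using boundedness of $h_k$ for $k<1$) is a welcome detail that the paper leaves implicit, but it does not change the nature of the argument.
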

\begin{proof}
Since we assumed (in the band model coordinates) $h_k(-\pi/2)=0$, the diffeomorphism $\f_k$ extends to the boundary, with 
$$\f_k(-\pi/2,v)=(-\pi/2,v)\qquad \f_k(\pi/2,v)=(\pi/2,v+h_k(\pi/2))~.$$
In other words, in the upper half-plane model, $\f_k$ is the identity on the portion $\{x\leq 0\}$ of $\R\cup\{\infty\}$, whereas it coincides on the portion $\{x>0\}$ with the restriction on $\R\cup\{\infty\}$ of the hyperbolic transformation whose axis is the geodesic of the upper half-plane with endpoints $0$ and $\infty$, and having (signed) translation distance 
\begin{equation} \label{eq lambda integral}
\lambda_k=h_k(\pi/2)=\int_{-\pi/2}^{\pi/2} \frac{2k^2\cos^2(\sigma)}{\sqrt{1-k^4\cos^4(\sigma)}}d\sigma~.
\end{equation}
%However, we will restrict to the case $k\in[0,1)$, so that the above quantity $\lambda_k$ is non-negative.
Hence, in the upper half-plane model, the boundary value of $\f_k$ is:
$$\phi_k(x)=\begin{cases} x & \textrm{if }x\leq 0 \\ e^{\lambda_k} x & \textrm{if }x> 0 \end{cases}~.$$
This concludes the proof.
\end{proof}

\begin{remark} \label{rmk cross ratio simple earth}
In the language of hyperbolic geometry, $\phi_k$ is the boundary value of a \emph{left earthquake map}, whose earthquake lamination is the geodesic line $\{x=0\}$ with weight $\lambda_k$.
To simplify the notation, let us denote the restriction to $\partial\Hyp^2$ of a simple left earthquake map with weight $\lambda$ by $\ph_\lambda$ -- namely, we put $\lambda=\lambda_k$ and we denote $\ph_\lambda=\phi_k$. Using Definition \ref{defi cross ratio norm}, it can be easily verified that 
$$||\ph_\lambda||_{cr}=\lambda~.$$
In fact, picking the quadruple $Q=(x_1,x_2,x_3,x_4)$ with:
$$x_1=-a\qquad x_2=0\qquad x_3=a\qquad x_4=\infty$$
for some $a>0$, by a direct computation $cr(Q)=-1$. On the other hand,
$$\phi_k(x_1)=-a\qquad \phi_k(x_2)=0\qquad \phi_k(x_3)=e^{\lambda} a\qquad \phi_k(x_4)=\infty~,$$
hence by another direct computation $cr(\phi_k(Q))=-e^{\lambda}$.
Therefore
$$||\ph_\lambda||_{cr}:=\sup_{cr(Q)=-1}\left|\log\left|cr(\ph(Q))\right|\right|\geq \lambda\,.$$
It is not difficult to check that $\left|\log\left|cr(\ph(Q))\right|\right|$ is actually maximized for quadruples of this form, and therefore $||\ph_\lambda||_{cr}=\lambda$.
See also \cite{hu_muzician_douadyearle}. Let us also remark that 
$$\lim_{k\to 0^+}||\phi_k||_{cr}=\lim_{\lambda\to 0^+}||\ph_\lambda||_{cr}=0~.$$
We will study more precisely the behavior of $||\phi_k||_{cr}$ as $k\to 0$ below, where we will also see (with a precise asymptotic behavior) that
$$\lim_{k\to 1^-}||\phi_k||_{cr}=+\infty~.$$
\end{remark}
%We shall give a more precise computation of the value $\lambda_k$. 

\subsection*{Computing the maximal dilatation} Let us moreover compute the maximal dilatation of the minimal Lagrangian diffeomorphism $\f_k:\Hyp^2\to\Hyp^2$. 

\begin{lemma} \label{lemma max dil simple}
The maximal dilatation of the minimal Lagrangian diffeomorphisms $\f_k:\Hyp^2\to\Hyp^2$ is:
$$K(\f_k)={\frac{1+k^2}{1-k^2}}~.$$
\end{lemma}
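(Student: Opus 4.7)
The plan is to read off the maximal dilatation directly from the bundle morphism $b$ constructed in Lemma \ref{lemma b square root}, which already encodes all the metric deformation performed by $\f_k$.

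The key observation is the following: since $\f_k^*g_{\Hyp^2} = g_{\Hyp^2}(b\cdot,b\cdot)$ with $b$ self-adjoint and positive definite, for every unit tangent vector $v$ we have $\|d\f_k(v)\|_{g_{\Hyp^2}} = \|b(v)\|_{g_{\Hyp^2}}$. Consequently, the principal axes of the ellipse $d(\f_k)_z(\mathbb{S}^1)$ are exactly the eigenvalues of $b$ at $z$. Since $\det b = 1$, these eigenvalues are reciprocal, say $\mu \geq 1$ and $1/\mu$, and the pointwise dilatation is $\mu^2$. Therefore $K(\f_k) = \sup_z \mu(z)^2$.

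Next, I would use the fact that $\tr b = \sqrt{4+h_k'(u)^2}$ (which was already shown in the proof of Lemma \ref{lemma b square root}) together with $\det b = 1$ to determine $\mu$ explicitly. From $\mu + 1/\mu = \sqrt{4+h_k'(u)^2}$, one solves $\mu = \tfrac{1}{2}\bigl(\sqrt{4+h_k'(u)^2} + |h_k'(u)|\bigr)$. Substituting the explicit formula \eqref{eq hk'} for $h_k'$, a short algebraic simplification gives $4+h_k'(u)^2 = 4/(1-k^4\cos^4(u))$, and after factoring $1-k^4\cos^4(u) = (1-k^2\cos^2(u))(1+k^2\cos^2(u))$, one obtains the clean expression
$$\mu(u)^2 = \frac{1+k^2\cos^2(u)}{1-k^2\cos^2(u)}~.$$

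Finally, since this quantity depends only on $u$ and is a strictly increasing function of $\cos^2(u)$ on $(-\pi/2,\pi/2)$, the supremum over $\Hyp^2$ is attained along $\{u=0\}$, yielding
$$K(\f_k) = \frac{1+k^2}{1-k^2}~.$$
The only step with any content is the algebraic simplification of $\mu^2$ using \eqref{eq hk'}; the rest is a direct consequence of the eigenvalue interpretation of $b$. There is no real obstacle here, as all the hard work of identifying $b$ has already been done in Lemma \ref{lemma b square root}.
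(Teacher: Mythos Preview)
Your proposal is correct and follows essentially the same approach as the paper: both read off the pointwise dilatation from the eigenvalues of $b$ (using $\det b=1$ and $\tr b=\sqrt{4+h_k'(u)^2}$ from Lemma~\ref{lemma b square root}), then maximize over $u$. The only cosmetic difference is that you simplify $\mu(u)^2=\dfrac{1+k^2\cos^2(u)}{1-k^2\cos^2(u)}$ for general $u$ before maximizing, whereas the paper first observes that $h_k'(u)$ is maximal at $u=0$ and then carries out the simplification at that point.
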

\begin{proof}
Recalling Definition \ref{defi quasiconformal} and the expression 
$$f^*g_{\Hyp^2}=g_{\Hyp^2}(b\cdot,b\cdot)~,$$
for any point $z\in\Hyp^2$ the ratio between the major axis and the minor axis of the ellipse $df_z(\mathbb S^1)$ equals ${\eta_+(z)}/{\eta_-(z)}$, where $\eta_+(z)$ and $\eta_-(z)$ are the largest and smallest eigenvalues of $b$ at $z$. Since $\det b=1$, $\eta_+\eta_-=1$ and therefore the maximal dilatation of $f$ is:
$$K(f)=\sup_{z\in\Hyp^2} \eta_+(z)^2~.$$ 
In the case of the minimal Lagrangian diffeomorphisms $\f_k$, since $\det b=1$ and $\tr b=\sqrt{4+h_k'(u)}$, the eigenvalues of $b$ are:
$$\eta_\pm(u,v)=\frac{\sqrt{4+h_k'(u)^2}\pm h_k'(u) }{2}~.$$
From Equation \eqref{eq hk'}, 
$$h_k'(u)=\frac{2k^2\cos^2(u)}{\sqrt{1-k^4\cos^4(u)}}$$
achieves its maximum at $u=0$, namely for:
$$h_k'(0)=\frac{2k^2}{\sqrt{1-k^4}}~.$$
Hence we finally have
\begin{align*}
K(\f_k)&=\left(\frac{\sqrt{4+h_k'(0)^2}+ h_k'(0) }{2}\right)^2 \\
&=\frac{1}{4}\left(\sqrt{4+\frac{4k^4}{1-k^4}}+\frac{2k^2}{\sqrt{1-k^4}}\right)^2 \\
&=\frac{1}{4}\left(\frac{2(1+k^2)}{\sqrt{1-k^4}}\right)^2=\left(\frac{1+k^2}{\sqrt{1-k^4}}\right)^2={\frac{1+k^2}{1-k^2}}~,
\end{align*}
as claimed.
\end{proof}

\subsection*{Comparison when $k\to 0$}

Let us now study the value of the ratio $\log K(\f_k)/||\phi_k||_{cr}$, in particular when $k\to 0$ and $k\to 1$. 

\begin{reptheorem}{thm simple earth k=0}[Case $\lambda\to 0$]
Let $f_{\ph_\lambda}:\Hyp^2\to\Hyp^2$ be the minimal Lagrangian diffeomorphisms which extend a simple left earthquake $\ph_\lambda$ of weight $\lambda$. Then
$$\lim_{\lambda\to 0}\frac{\log K(f_{\ph_\lambda})}{||\ph_\lambda||_{cr}}=\frac{2}{\pi}\approx 0.64~.$$
\end{reptheorem}
\begin{proof}
First, recall that the cross-ratio norm of ${\ph_\lambda}$ is $||\ph_\lambda||_{cr}=\lambda$, from Remark \ref{rmk cross ratio simple earth}. Since $\phi_k=\ph_{\lambda_k}$, it follows that $f_{\ph_\lambda}=\f_k$ is the minimal Lagrangian homeomorphism extending a simple left earthquake of weight $\lambda_k$. As $||\phi_k||_{cr}=\lambda_k\to 0$ when $k\to 0$, we have
$$\lim_{\lambda\to 0^+}\frac{\log K(f_{\ph_\lambda})}{||\ph_\lambda||_{cr}}=\lim_{k\to 0^+}\frac{\log K(\f_k)}{||\phi_k||_{cr}}~.$$

We first need to estimate $||\phi_k||_{cr}=\lambda_k$, which is defined by the integral of Equation \eqref{eq lambda integral}. Since ${1-k^4\cos^4(u)}=(1-k^2\cos^2(u))(1+k^2\cos^2(u))$ and 
$$1\leq \sqrt{1+k^2\cos^2(u)}\leq \sqrt{1+k^2}~,$$
we can estimate
\begin{equation} \label{eq double ineq integral}
\frac{1}{\sqrt{1+k^2}}\int_{-\pi/2}^{\pi/2} \frac{2k^2\cos^2(\sigma)}{\sqrt{1-k^2\cos^2(\sigma)}}d\sigma\leq \lambda_k\leq \int_{-\pi/2}^{\pi/2} \frac{2k^2\cos^2(\sigma)}{\sqrt{1-k^2\cos^2(\sigma)}}d\sigma~.
\end{equation}
From some algebraic steps (in the second line we change variable $\theta=\pi/2-\sigma$),
\begin{align*}
\int_{-\pi/2}^{\pi/2} \frac{2k^2\cos^2(\sigma)}{\sqrt{1-k^2\cos^2(\sigma)}}d\sigma&=4\int_{0}^{\pi/2} \frac{k^2\cos^2(\sigma)}{\sqrt{1-k^2\cos^2(\sigma)}}d\sigma \\
&=4\int_{0}^{\pi/2} \frac{k^2\sin^2(\theta)}{\sqrt{1-k^2\sin^2(\theta)}}d\theta \\
&=4\left(\int_{0}^{\pi/2} \frac{d\theta}{\sqrt{1-k^2\sin^2(\theta)}}-\int_{0}^{\pi/2} {\sqrt{1-k^2\sin^2(\theta)}}d\theta\right) \\
&=4(K(k)-E(k))~,
\end{align*}
where 
$$K(k)=\int_{0}^{\pi/2} \frac{d\theta}{\sqrt{1-k^2\sin^2(\theta)}}$$
 denotes the complete elliptic integral of the first kind (there is a small abuse of notation, since $K$ also denotes the maximal dilatation, but this is the standard notation in both cases), and 
 $$E(k)=\int_{0}^{\pi/2} {\sqrt{1-k^2\sin^2(\theta)}}d\theta$$
 is the complete elliptic integral of the second kind.
 
When $k\to 0$, $K(k)$ has the expansion:
$$K(k)=\frac{\pi}{2}+\frac{\pi}{8}k^2+O(k^4)~,$$
whereas $E(k)$ has the expansion:
$$E(k)=\frac{\pi}{2}-\frac{\pi}{8}k^2+O(k^4)~,$$
This shows that
$$4(K(k)-E(k))= {\pi}k^2+O(k^4)~.$$
On the other hand,
$$\log K(\f_k)=\log\left({\frac{1+k^2}{1-k^2}}\right)=2k^2+O(k^6)~.$$
Therefore, using \eqref{eq double ineq integral},
$$\liminf_{k\to 0^+}\frac{\log K(\f_k)}{||\phi_k||_{cr}}\geq \lim_{k\to 0^+} \frac{2k^2+O(k^6)}{\pi k^2+O(k^4)}=\frac{2}{\pi}~.$$
Similarly, using the other inequality in \eqref{eq double ineq integral}, one shows:
$$\limsup_{k\to 0^+}\frac{\log K(\f_k)}{||\phi_k||_{cr}}\leq \lim_{k\to 0^+} \frac{2k^2+O(k^6)}{\pi k^2/\sqrt{1+k^2}+O(k^4)}=\lim_{k\to 0^+} \frac{2k^2+O(k^6)}{\pi k^2+O(k^4)}=\frac{2}{\pi}~,$$
which concludes the proof.
\end{proof}

%\begin{remark}
%Numerical computation, applied to the integral of Equation \eqref{eq lambda integral}, suggests that the actual value of the limit ${\log K(f_{\ph_\lambda})}/{||\ph_\lambda||_{cr}}$ should be $2/\pi$ --- namely, it is expected to coincide with the lower bound proved here.
%\end{remark}
%but this does not improve the already known lower bound 
%$$\liminf_{||\ph||_{cr}\to 0}\frac{\log(K(f))}{||\ph||_{cr}}\geq \frac{1}{2}~.$$

\subsection*{Comparison when $k\to 1$}

Employing the same computations, we can study the limit of the ratio when $k$ approaches 1. 
\begin{reptheorem}{thm simple earth k=1}[Case $\lambda\to +\infty$]
Let $f_{\ph_\lambda}:\Hyp^2\to\Hyp^2$ be the minimal Lagrangian diffeomorphisms which extend a simple left earthquake $\ph_\lambda$ of weight $\lambda$. Then
$$\lim_{\lambda\to +\infty}\frac{\log K(f_{\ph_\lambda})}{||\ph_\lambda||_{cr}}=\frac{\sqrt 2}{2}\approx 0.71~.$$
\end{reptheorem}
\begin{proof}
To simplify the notation, we will prove the equivalent statement
$$\lim_{k\to 1^-}\frac{||\phi_k||_{cr}}{\log K(\f_k)}=\sqrt{2}~,$$
where $||\phi_k||_{cr}=\lambda_k$ is defined by the integral of Equation \eqref{eq lambda integral}.

As a preliminary remark, from the proof of the previous case let us define

$$F(k):=\int_{-\pi/2}^{\pi/2} \frac{2k^2\cos^2(\sigma)}{\sqrt{1-k^2\cos^2(\sigma)}}d\sigma=4(K(k)-E(k))~.$$
When $k\to 1$, 
$E(k)\to 1$, while $K(k)$ goes to infinity with the asymptotic expression (see \cite[Lemma 4]{MR1887769}):
$$K(k)=\log 4-\frac{1}{2}\log(1-k^2)+O(|(1-k^2)\log(1-k^2)|)~.$$
In particular, $\lambda_k\to+\infty$ as $k\to 1^-$. For the maximal dilatation, from Lemma \ref{lemma max dil simple},
$$\log K(\f_k)=\log\left(\frac{1+k^2}{1-k^2}\right)=\log(1+k^2)-\log(1-k^2)~.$$
Hence, when taking the limit for $k\to 1$, we obtain:

\begin{equation}\label{eq lime}
\lim_{k\to 1^-}\frac{F(k)}{\log K(\f_k)}= \lim_{k\to 1^-}\frac{4\left(\log 4-1-\frac{1}{2}\log(1-k^2)\right)}{\log(1+k^2)-\log(1-k^2)}={2}~.
\end{equation}

From Equation \eqref{eq lambda integral}, writing ${1-k^4\cos^4(u)}=(1-k^2\cos^2(u))(1+k^2\cos^2(u))$ and 
estimating $\sqrt{1+k^2\cos^2(u)}\leq \sqrt{2}$, we get (using \eqref{eq lime}):
$$\liminf_{k\to 1^-}\frac{||\phi_k||_{cr}}{\log K(\f_k)}\geq \frac{1}{\sqrt 2}\lim_{k\to 1^-}\frac{F(k)}{\log K(\f_k)}=\frac{2}{\sqrt 2}=\sqrt 2~.$$

To show the other inequality, fix $\epsilon>0$. Observe that $1+k^2\cos^2(u)\geq 2-\epsilon$ if and only if $k^2\cos^2(u)\geq 1-\epsilon$, which is equivalent to $|\sigma|\leq \sigma_0$ for
$$\sigma_0:=\arccos\left(\frac{\sqrt{1-\epsilon}}{k}\right)~.$$
Hence we have
$$\int_{-\sigma_0}^{\sigma_0} \frac{2k^2\cos^2(\sigma)}{\sqrt{1-k^4\cos^4(\sigma)}}d\sigma\leq \frac{1}{\sqrt{2-\epsilon}}\int_{-\sigma_0}^{\sigma_0} \frac{2k^2\cos^2(\sigma)}{\sqrt{1-k^2\cos^2(\sigma)}}d\sigma\leq \frac{1}{\sqrt{2-\epsilon}}F(k)~,$$
whereas for $|\sigma|\geq \sigma_0$, using $1+k^2\cos^2(\sigma)\geq 1$ and $k^2\cos^2(u)\leq 1-\epsilon$, we have
$$\int_{\sigma_0}^{\pi/2} \frac{2k^2\cos^2(\sigma)}{\sqrt{1-k^4\cos^4(\sigma)}}d\sigma\leq \frac{2}{\sqrt \epsilon}\int_{\sigma_0}^{\pi/2}d\sigma\leq \frac{\pi}{\sqrt \epsilon}~.$$
Thus we can estimate: 
$$
\frac{1}{\log K(\f_k)}\int_{-\pi/2}^{\pi/2} \frac{2k^2\cos^2(\sigma)}{\sqrt{1-k^4\cos^4(\sigma)}}d\sigma \\
\leq \frac{1}{\sqrt{2-\epsilon}}\frac{F(k)}{\log K(\f_k)}+\frac{2\pi}{\sqrt\epsilon\log K(\f_k)}~.
$$
Choosing $k$ large enough, the first term can be made smaller than $(2+\epsilon)/\sqrt{2-\epsilon}$ as a consequence of \eqref{eq lime}, while the second term can be male smaller than  $\epsilon$ (say) since $\log K(\f_k)$ goes to infinity. This shows that
 $$\limsup_{k\to 1^-}\frac{||\phi_k||_{cr}}{\log K(\f_k)}\leq \sqrt 2$$
and therefore concludes the proof.
\end{proof}

%\begin{remark}
%Again, numerical computations show that the value of the ratio ${\log K(\f_k)}/{||\phi_k||_{cr}}$ approaches the value $\sqrt 2/2$, as $k$ approaches $1$, hence we conjecture that this is the actual value.
%\end{remark}

\section{Extending power functions} \label{sec power}

In this section, we will construct another one-parameter family of minimal Lagrangian maps $\mathfrak g_\alpha$, depending on a parameter $\alpha\in [1,+\infty)$. 

\subsection*{Coordinates from a geodesic}
It will be convenient here to use a different coordinate system for $\Hyp^2$. Let $\ell$ be a geodesic of $\Hyp^2$ as in the previous section, and suppose that in the upper half-plane model $\ell=\{x=0\}$, so that $T_\delta(z)=e^\delta\cdot z$ is the hyperbolic translation preserving $\ell$. Let $\gamma:\R\to\Hyp^2$ a unit speed parameterization of the geodesic orthogonal to $\ell$ through the point $i$, and consider the global coordinate $\varsigma:\R^2\to\Hyp^2$ defined by:
$$\varsigma(s,t)=T_t\circ \gamma(s)~.$$
It can be checked that in these coordinates the metric takes the form:
$$\varsigma^*g_{\Hyp^2}=ds^2+\cosh^2(s)dt^2~.$$
The hyperbolic translation takes the simple form
$$T_\delta(s,t)=(s,t+\delta)~.$$

\subsection*{Ansatz for self-adjoint Codazzi tensors}
In this section we will determine a bundle morphism $b\in\Gamma(\mathrm{End}(T\Hyp^2))$ satisfying the conditions of Proposition \ref{prop char min lag}, and we will later study the behavior of a minimal Lagrangian map associated. For this purpose, we will use the following property.

\begin{lemma}[{\cite{oliker},\cite[Lemma 2.1]{bonseppicodazzi}}] \label{lemma codazzi rho}
Given any smooth function $\rho:\Hyp^2\to\R$, the bundle morphism 
$$b=\Hess\rho-\rho\mathbbm 1$$
is self-adjoint for $g_{\Hyp^2}$ and satisfies the Codazzi equation $d^\nabla b=0$, 
where $\Hess\rho$ denotes the Hessian of $\rho$ for the metric $g_{\Hyp^2}$, as a (1,1)-tensor, and $\mathbbm 1$ is the identity operator.
\end{lemma}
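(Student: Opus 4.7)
The plan is to handle the two conclusions separately. \emph{Self-adjointness} is essentially automatic: the Hessian $\Hess\rho$, viewed as a $(1,1)$-tensor via the metric, is self-adjoint because the bilinear form $(V,W)\mapsto g_{\Hyp^2}(\nabla_V\grad\rho,W)$ is symmetric (a standard consequence of torsion-freeness of the Levi-Civita connection), and $\rho\mathbbm{1}$ is obviously self-adjoint, so their difference $b$ is too.

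For the \emph{Codazzi condition} $d^\nabla b = 0$, the strategy is to expand $(d^\nabla b)(V,W) = \nabla_V(b(W)) - \nabla_W(b(V)) - b[V,W]$ after substituting $b(X) = \nabla_X\grad\rho - \rho X$, and to sort the resulting terms into two groups. The pieces coming from the identity part $\rho\mathbbm{1}$ collapse to $W(\rho)V - V(\rho)W$, with the remaining contribution $-\rho(\nabla_V W - \nabla_W V - [V,W])$ vanishing by torsion-freeness. The pieces coming from $\Hess\rho$ assemble into
$$\nabla_V\nabla_W\grad\rho - \nabla_W\nabla_V\grad\rho - \nabla_{[V,W]}\grad\rho,$$
which is precisely $R(V,W)\grad\rho$ by the definition of the Riemann curvature tensor.

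At this point, the proof reduces to verifying the identity $R(V,W)\grad\rho = V(\rho)W - W(\rho)V$, and this is where the hypothesis that the ambient manifold is $\Hyp^2$ enters. In constant sectional curvature $-1$ one has
$$R(V,W)Z = -\bigl(g_{\Hyp^2}(W,Z)V - g_{\Hyp^2}(V,Z)W\bigr),$$
and specialising to $Z = \grad\rho$ together with $g_{\Hyp^2}(\grad\rho,X) = X(\rho)$ produces exactly the needed formula, causing the remaining terms to cancel. I do not anticipate any real obstacle: the entire argument is a direct tensor manipulation, and the only substantive geometric input is the curvature formula above. As a sanity check, the same computation would show that $\Hess\rho + \kappa\rho\mathbbm{1}$ is Codazzi on any surface of constant sectional curvature $\kappa$, so the choice of sign in the definition of $b$ reflects exactly the curvature of $\Hyp^2$.
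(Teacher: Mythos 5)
Your proof is correct: the self-adjointness argument and the reduction of $d^\nabla b$ to $R(V,W)\grad\rho + W(\rho)V - V(\rho)W$, killed by the constant-curvature identity $R(V,W)Z = g(V,Z)W - g(W,Z)V$, is exactly the standard computation. The paper does not prove this lemma itself but only cites the references, and your argument (including the observation that $\Hess\rho+\kappa\rho\mathbbm 1$ is Codazzi in curvature $\kappa$) is the one given there.
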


In \cite{oliker} the converse statement was also proved, that is, any self-adjoint Codazzi tensor on a simply connected hyperbolic surface has the form $\Hess\rho-\rho\mathbbm 1$ for some function $\rho$.

In this section, using this approach, we will look for a function $\rho:\Hyp^2\to\R$ so that $b=\Hess\rho-\rho\mathbbm 1$ is a tensor as in Proposition \ref{prop char min lag}. Conversely to the previous section, here the condition $d^\nabla b=0$ will be automatically satisfied by Lemma \ref{lemma codazzi rho}, and the condition to impose is $\det b=1$.

The ansatz here is to consider a function which only depends on $s$, that is,
\begin{equation} \label{eq ansatz 2}
\rho(s,t)=r(s)~.
\end{equation}

\subsection*{Imposing the unit determinant condition}
Let us now compute the condition that $\det b=1$ for a tensor $b=\Hess\rho-\rho\mathbbm 1$.

\begin{lemma} \label{lemma ode unit det}
Let $\rho:\Hyp^2\to\R$ be a smooth function of the form
$$\rho(s,t)=r(s)$$
and let $b=\Hess\rho-\rho\mathbbm 1$. Then $\det b=1$ if and only 
\begin{equation} \label{eq det hess ode}
g'(s)=\tanh(s)\left(\frac{1}{g(s)}-g(s)\right)~,
\end{equation}
where $g(s)=r'(s)\tanh(s)-r(s)$ and $1/g(s)$ are the eigenvalues of $b$.
\end{lemma}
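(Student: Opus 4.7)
The plan is to compute $b$ explicitly in the $(s,t)$ coordinates, observe that it is diagonal in the coordinate basis with one eigenvalue equal to $\tanh(s)r'(s)-r(s)=g(s)$, and then translate the condition $\det b=1$ into an ODE involving $g$ and its derivative.

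First I would compute the Christoffel symbols of $\varsigma^*g_{\Hyp^2}=ds^2+\cosh^2(s)dt^2$. A direct calculation gives $\Gamma^s_{tt}=-\cosh(s)\sinh(s)$, $\Gamma^t_{st}=\Gamma^t_{ts}=\tanh(s)$, and all others vanish. Then, for $\rho(s,t)=r(s)$, the Hessian as a $(0,2)$-tensor has components $(\Hess\rho)_{ss}=r''(s)$, $(\Hess\rho)_{st}=0$, and $(\Hess\rho)_{tt}=\cosh(s)\sinh(s)\,r'(s)$, because $\partial_t\rho=0$ and only $\Gamma^s_{tt}$ contributes to the $tt$ component. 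Raising an index using the inverse metric turns $\Hess\rho$ into the $(1,1)$-tensor whose matrix in the basis $(\partial_s,\partial_t)$ is diagonal with entries $r''(s)$ and $\tanh(s)r'(s)$. Subtracting $r(s)\mathbbm 1$ I would find that $b$ is diagonal with eigenvalues
\[
b^s_{\ s}=r''(s)-r(s),\qquad b^t_{\ t}=\tanh(s)r'(s)-r(s)=g(s).
\]

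Next I would impose $\det b=1$, which means $b^s_{\ s}\cdot g(s)=1$, equivalently $r''(s)-r(s)=1/g(s)$. It then remains to check that this algebraic condition is equivalent to the differential equation stated in the lemma. Differentiating $g(s)=\tanh(s)r'(s)-r(s)$ and using $1-\mathrm{sech}^2(s)=\tanh^2(s)$, a short computation gives
\[
g'(s)=\tanh(s)\bigl[\,r''(s)-\tanh(s)r'(s)\,\bigr]=\tanh(s)\bigl[\,(r''(s)-r(s))-g(s)\,\bigr],
\]
so that substituting $r''(s)-r(s)=1/g(s)$ yields exactly $g'(s)=\tanh(s)\bigl(1/g(s)-g(s)\bigr)$. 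The converse direction is immediate: the identity for $g'(s)$ just derived holds unconditionally, hence if the ODE is satisfied then $r''(s)-r(s)=1/g(s)$, giving $\det b=1$.

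No step looks genuinely difficult, so I do not expect an obstacle; the only points requiring mild care are getting the Christoffel symbols right (in particular the sign of $\Gamma^s_{tt}$) and correctly raising the index on $\Hess\rho$ before subtracting $\rho\mathbbm 1$, since the factor $\cosh^{-2}(s)$ is what turns the $tt$ eigenvalue into $\tanh(s)r'(s)$ and makes the identification with $g(s)$ work.
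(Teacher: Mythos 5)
Your computation is correct and follows essentially the same route as the paper: you identify $b$ as diagonal in the $(s,t)$-coordinates with eigenvalues $r''(s)-r(s)$ and $g(s)=\tanh(s)r'(s)-r(s)$ (the paper does this in the orthonormal frame $\partial_s$, $\cosh(s)^{-1}\partial_t$, which amounts to the same index-raising), and then convert $\det b=1$ into the stated ODE via the unconditional identity $g'(s)=\tanh(s)\bigl[(r''(s)-r(s))-g(s)\bigr]$. The only cosmetic difference is the order of substitution (the paper solves for $r''-r$ in terms of $g'$ and $g$ and plugs into the determinant, you plug the determinant condition into $g'$), and both arguments share the same harmless caveat that $g$ must be nonvanishing and that the equivalence at $s=0$ follows by continuity.
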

\begin{proof} We need to consider the condition
$$\det(\Hess\rho-\rho\mathbbm 1)=1$$
for the function $\rho(s,t)=r(s)$.
Recall that the hyperbolic Hessian is defined as
$$\Hess \rho(v)=\nabla_v\grad\rho~.$$
To compute the Hessian, consider the orthonormal frame
$$v_s=\partial_s\qquad v_t=\frac{1}{\cosh(s)}\partial_t~.$$
Observe that, by an explicit computation for the bracket,
$$\nabla_{v_s}v_t-\nabla_{v_t}v_s=[v_s,v_t]=-\tanh(s)v_t~,$$
hence $\nabla_{v_t}v_s=\tanh(s)v_t$ and $\nabla_{v_s}v_t=0$.

Since $\grad \rho(s,t)=r'(s)\partial_s=r'(s)v_s$, we have:
$$\Hess \rho(v_s)=\nabla_{v_s}(r'(s)v_s)=r''(s)v_s~,$$
whereas
$$\Hess \rho(v_t)=\nabla_{v_t}(r'(s)v_s)=r'(s)\nabla_{v_t}v_s=r'(s)\tanh(s)v_t~.$$
Hence in matrix form,
$$\Hess\rho-\rho\mathbbm 1=\begin{pmatrix} r''(s)-r(s) & 0 \\ 0 & r'(s)\tanh(s)-r(s) \end{pmatrix}~.$$

Let us define $g(s)=r'(s)\tanh(s)-r(s)$, so that $g(s)$ is an eigenvalue of $b$ at the point with coordinates $(s,t)$. Observe that 
$$g'(s)=r''(s)\tanh(s)+r'(s)(1-\tanh^2(s)-1)=\tanh(s)(r''(s)-r'(s)\tanh(s))~,$$
hence 
$$r''(s)-r(s)=\frac{1}{\tanh(s)}g'(s)+g(s)~.$$

This shows that $\Hess\rho-\rho\mathbbm 1$ takes the form:
$$\Hess\rho-\rho\mathbbm 1=\begin{pmatrix} \frac{1}{\tanh(s)}g'(s)+g(s) & 0 \\ 0 & g(s) \end{pmatrix}~,$$
and therefore we obtain the following ODE on $g$:
$$\det(\Hess\rho-\rho\mathbbm 1)=g(s)\left(\frac{1}{\tanh(s)}g'(s)+g(s)\right)=1~.$$
By algebraic manipulation (since, from the form of the equation, any solution $g$ cannot vanish at any point) this equation is equivalent to:
$$g'(s)=\tanh(s)\left(\frac{1}{g(s)}-g(s)\right)~,$$
as claimed.
\end{proof}

\subsection*{Solutions of ODE and definition of the diffeomorphisms $\g_\alpha$}

Let us now study the solutions of Equation \eqref{eq det hess ode}. We remark that $g(s)\equiv 1$ is a solution. Assuming $g(s)\neq 1$, we obtain the following form:
$$\frac{g'(s)g(s)}{1-g(s)^2}=\tanh(s)~.$$
Hence we can distinguish three cases:
\begin{itemize}
\item $g(s)=1$ is a solution for which $b=\mathbbm 1$.
\item If $g(s)<1$, then $1-g(s)^2>0$ and we obtain
$$-\frac{1}{2}\log (1-g(s)^2)=\log\cosh(s)+c~,$$
which leads to
$$1-g(s)^2=\frac{e^c}{\cosh^2(s)}=e^c(1-\tanh^2(s))~.$$
\item If $g(s)>1$, then $1-g(s)^2<0$ and we obtain
$$\frac{1}{2}\log (g(s)^2-1)=-\log\cosh(s)+c~,$$
which leads to 
$$g(s)^2-1=\frac{e^c}{\cosh^2(s)}=e^c(1-\tanh^2(s))~.$$
\end{itemize}

In conclusion, we have the solutions 
$$g_a(s)=\pm\sqrt{1+a(1-\tanh^2(s))}~,$$
where we can allow any value of $a\in\R$ and include all the three cases. We will only consider the positive root, since we can assume that $b$ is positive definite (and the other choice of sign corresponds to changing $b$ with $-b$).

Moreover, observe that $g_a$ is defined on $\R$ only if $a>-1$. Here we will only consider $a\geq 0$, so that $g_a$ is defined on $\R$ and $g(s)\geq 1$ --- namely, recalling Lemma \ref{lemma ode unit det}, $g(s)$ is the largest eigenvalue of $b$. To study the minimal Lagrangian diffeomorphisms associated to the tensors $b$ obtained in this way, we first need to analyze the metric $g_{\Hyp^2}(b\cdot,b\cdot)$.

\begin{lemma} \label{lemma global isometry}
Let $g_a:\R\to\R$, for $a\geq 0$, be defined by
$$g_a(s)=\sqrt{1+a(1-\tanh^2(s))}~,$$
and let $b_a$ be the self-adjoint, positive definite, Codazzi tensor
$$b_a=\begin{pmatrix} \frac{1}{g_a(s)} & 0 \\ 0 & g_a(s) \end{pmatrix}$$
in the $(s,t)$-coordinates of $\Hyp^2$. Then $g_{\Hyp^2}(b_a\cdot,b_a\cdot)$ has constant curvature $-1$ and is complete, hence it is isometric to $\Hyp^2$. Moreover, $\ell_0=\{s=0\}$ is a geodesic, while the $\{t=t_0\}$ are geodesics orthogonal to $\ell_0$ for every $t_0\in\R$.
\end{lemma}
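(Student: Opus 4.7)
The plan is to exhibit an explicit isometry between $(\Hyp^2, g_{\Hyp^2}(b_a\cdot,b_a\cdot))$ and the model $(\R^2, d\sigma^2 + \cosh^2(\sigma)\,d\tau^2)$ of $\Hyp^2$ in Fermi coordinates about a geodesic, and then read off the geometric statements from the change of variables.

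First I would compute $g_{\Hyp^2}(b_a\cdot,b_a\cdot)$ directly in $(s,t)$-coordinates. Since $g_{\Hyp^2} = ds^2 + \cosh^2(s)\,dt^2$ is diagonal in the coordinate basis and $b_a$ is diagonal with eigenvalues $1/g_a(s)$ and $g_a(s)$, one finds
\[
g_{\Hyp^2}(b_a\cdot,b_a\cdot) = \frac{1}{g_a(s)^2}\,ds^2 + g_a(s)^2\cosh^2(s)\,dt^2 .
\]
Using the identity $g_a(s)^2 = 1 + a/\cosh^2(s) = (\cosh^2(s)+a)/\cosh^2(s)$, the conformal coefficients simplify to $\cosh^2(s)/(\cosh^2(s)+a)$ in front of $ds^2$ and $\cosh^2(s)+a$ in front of $dt^2$.

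Next, I would introduce the change of variables
\[
\sigma := \arcsinh\!\left(\frac{\sinh(s)}{\sqrt{1+a}}\right), \qquad \tau := \sqrt{1+a}\,t .
\]
A direct differentiation gives $d\sigma/ds = \cosh(s)/\sqrt{\cosh^2(s)+a}$, which matches $1/g_a(s)$; and from $\sinh(\sigma) = \sinh(s)/\sqrt{1+a}$ one computes $(1+a)\cosh^2(\sigma) = \cosh^2(s)+a = g_a(s)^2\cosh^2(s)$. Substituting these two identities transforms the metric above into
\[
d\sigma^2 + \cosh^2(\sigma)\,d\tau^2 ,
\]
which is the standard Fermi-coordinate expression of the hyperbolic metric along a geodesic. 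Since $(s,t)\mapsto(\sigma,\tau)$ is a global diffeomorphism $\R^2\to\R^2$ (because $\arcsinh$ and the rescaling by $\sqrt{1+a}$ are each global diffeomorphisms of $\R$), the target metric $g_{\Hyp^2}(b_a\cdot,b_a\cdot)$ has constant curvature $-1$ and is complete, hence isometric to $\Hyp^2$.

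Finally, the geodesic statements are immediate from the model. In $(\sigma,\tau)$-coordinates, $\{\sigma=0\}$ is the axis and $\{\tau=\tau_0\}$ are the geodesics orthogonal to it. Under the change of variables, $\sigma=0$ if and only if $s=0$, so $\ell_0=\{s=0\}$ is a geodesic for $g_{\Hyp^2}(b_a\cdot,b_a\cdot)$; and $\{\tau=\tau_0\}$ corresponds to $\{t=\tau_0/\sqrt{1+a}\}$, so each $\{t=t_0\}$ is a geodesic orthogonal to $\ell_0$. The only step requiring any care is guessing the right substitution for $\sigma$; the substitution $u=\sinh(s)$ in $\int ds/g_a(s)$ reduces the integral to $\int du/\sqrt{(1+a)+u^2}$, which points directly to the formula for $\sigma$ above.
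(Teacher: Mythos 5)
Your proof is correct, and it takes a genuinely different route from the paper's. The paper never writes down an explicit isometry: it deduces constant curvature $-1$ from the general formula $\kappa_{g(b\cdot,b\cdot)}=\kappa_g/\det b$ valid for Codazzi tensors (citing Labourie and Krasnov--Schlenker), identifies $\{s=0\}$ and $\{t=t_0\}$ as geodesics via the reflection symmetries $(s,t)\mapsto(-s,t)$ and $(s,t)\mapsto(s,t_0-t)$ of the metric, and argues completeness separately from the bound $g_a(s)\geq 1$. You instead exhibit the explicit change of variables $\sigma=\arcsinh\bigl(\sinh(s)/\sqrt{1+a}\bigr)$, $\tau=\sqrt{1+a}\,t$, and your identities $d\sigma/ds=\cosh(s)/\sqrt{\cosh^2(s)+a}=1/g_a(s)$ and $(1+a)\cosh^2(\sigma)=\cosh^2(s)+a=g_a(s)^2\cosh^2(s)$ check out, so the metric is carried to the Fermi-coordinate form $d\sigma^2+\cosh^2(\sigma)\,d\tau^2$ by a global diffeomorphism of $\R^2$; curvature, completeness, and the geodesic statements then all follow at once. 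Your argument is more elementary and self-contained (no appeal to the Codazzi curvature formula), at the cost of being specific to this example, whereas the paper's symmetry argument generalizes to any even $g_a$; as a bonus, your substitution $\tau=\sqrt{1+a}\,t$ makes transparent the stretch factor $g_a(0)=\sqrt{1+a}$ along $\ell_0$ that the paper extracts afterwards in the proof of Lemma \ref{lemma extension power}.
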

\begin{proof}
First, we claim that $g_{\Hyp^2}(b_a\cdot,b_a\cdot)$ is a hyperbolic metric. In fact, since $d^\nabla b=0$, we have the following formula for the curvatures $\kappa$ (see \cite{labourieCP} or \cite{Schlenker-Krasnov}):
$$\kappa_{g_{\Hyp^2}(b_a\cdot,b_a\cdot)}=\frac{\kappa_{g_{\Hyp^2}}}{\det b_a}$$
and therefore $g_{\Hyp^2}(b_a\cdot,b_a\cdot)$ has curvature $-1$ since $g_{\Hyp^2}$ does, and $\det b_a=1$.

Now, in the $(s,t)$-coordinates, the metric $g_{\Hyp^2}(b_a\cdot,b_a\cdot)$ takes the form:
$$g_{\Hyp^2}(b_a\cdot,b_a\cdot)=\frac{1}{g_a(s)^2}ds^2+g_a(s)^2\cosh^2(s)dt^2~.$$
From the form of $g_a(s)$, it turns out that $g_a(-s)=g_a(s)$ and therefore $(s,t)\mapsto (-s,t)$ is an isometry for $g_{\Hyp^2}(b_a\cdot,b_a\cdot)$. This implies that $\ell_0=\{s=0\}$ is a geodesic. It is also complete, since the arclength parameter is $t'=t/g_a(0)$, which is defined in $\R$. Also the reflections $(s,t)\mapsto (s,t_0-t)$ are isometries for every $t_0\in\R$. Therefore the lines $\{t=t_0\}$ are geodesics for every $t_0$, and are clearly orthogonal to $\ell_0$. Finally, all such geodesics are complete, as a consequence of the fact that $g_a(s)$ is bounded from below by $1$ (recall that $a\geq 0$). This shows that $g_{\Hyp^2}(b_a\cdot,b_a\cdot)$ is isometric to $\Hyp^2$, and the isometries $(s,t)\mapsto (s,t+t_0)$ are hyperbolic isometries preserving $\ell_0$.
\end{proof}

In the following definition, we use the same notation as in Lemma \ref{lemma global isometry}.

\begin{defi}
For every $a\geq 0$, we define $\g_a:\Hyp^2\to\Hyp^2$ to be the isometry for the metrics $g_{\Hyp^2}(b_a\cdot,b_a\cdot)$ on the source and $g_{\Hyp^2}$ on the target, which (in the $(s,t)$-coordinates) maps the geodesic $\ell_0=\{s=0\}$ of $g_{\Hyp^2}(b_a\cdot,b_a\cdot)$  to the  geodesic $\ell=\{s=0\}$ of $g_{\Hyp^2}$.
%and the point $(0,0)$ to $(0,0)$.
\end{defi}

\subsection*{Computing the cross-ratio norm} Clearly, $\g_a$ is not an isometry, when one takes the metric $g_{\Hyp^2}$ both on the source and on the target. Up to composing with a hyperbolic translation preserving $\ell$, we can assume 
$$\g_a(0,0)=(0,0)~.$$
 In other words, $\g_a$ can be considered as the identity map of $\Hyp^2$, with respect to the metric $g_{\Hyp^2}$ on the source and  the metric $g_{\Hyp^2}(b_a\cdot,b_a\cdot)$ on the target. With this convention, $\g_a$ takes the following boundary value:

\begin{lemma} \label{lemma extension power}
The minimal Lagrangian diffeomorphisms $\g_a:\Hyp^2\to \Hyp^2$ extend to $\RP^1$, with boundary value (in the upper half-plane model) the homeomorphism: 
$$\psi_\alpha(x)=\begin{cases} x^{\alpha} & \textrm{if }x\geq 0 \\  -|x|^{\alpha} & \textrm{if }x< 0 \end{cases}~,$$
where $\alpha=\sqrt{1+a}$.
\end{lemma}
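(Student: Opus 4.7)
The plan is to exploit the fact that $\g_a$, viewed in the common $(s,t)$-coordinates of this section, is an isometry between two hyperbolic metrics on the same underlying manifold, and that these two metrics share the common group $\{T_\delta : \delta \in \R\}$, $T_\delta(s,t) = (s,t+\delta)$, of translations along $\ell_0 = \ell = \{s=0\}$.

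First I would compute how $\g_a$ acts on $\ell_0$. The source metric $g_{\Hyp^2}(b_a\cdot, b_a\cdot)$, written in $(s,t)$-coordinates as $\tfrac{1}{g_a(s)^2}ds^2 + g_a(s)^2 \cosh^2(s)\, dt^2$, restricts on $\ell_0$ to $(1+a)\, dt^2$ since $g_a(0)^2 = 1+a$. Hence the arclength parameter on $\ell_0$ in the source metric is $\sqrt{1+a}\cdot t$, whereas on $\ell$ in the target metric it is simply $t$. Since $\g_a$ is an isometry sending $\ell_0$ to $\ell$ with $\g_a(0,0)=(0,0)$ and preserves the orientation of the positive $t$-direction, it must satisfy $\g_a(0,t) = (0, \sqrt{1+a}\cdot t)$.

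Next I would extend this to the whole of $\Hyp^2$ by equivariance. The translations $T_\delta$ are isometries of both metrics: on the source they realize hyperbolic translations along $\ell_0$ of signed distance $\sqrt{1+a}\cdot\delta$, whereas on the target they realize hyperbolic translations along $\ell$ of signed distance $\delta$. Since $\g_a$ is an isometry between the two metrics mapping $\ell_0$ to $\ell$, it conjugates $T_\delta$ (viewed on the source) to $T_{\sqrt{1+a}\cdot\delta}$ (viewed on the target). This forces
$$\g_a(s,t) = (\bar s(s),\, \sqrt{1+a}\cdot t)$$
for some function $\bar s:\R\to\R$, which is determined by comparing arclengths along the geodesic $\{t=0\}$: in the source this is $\int_0^s d\sigma/g_a(\sigma)$, whereas in the target it is simply $s$, giving $\bar s(s) = \int_0^s d\sigma/g_a(\sigma)$. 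Since $g_a>0$ and $g_a(\sigma)\to 1$ as $\sigma\to\pm\infty$, the map $\bar s$ is a smooth increasing diffeomorphism of $\R$.

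Finally I would translate to the upper half-plane model. Recall that in $(s,t)$-coordinates the upper half-plane point is $\varsigma(s,t) = e^t \gamma(s)$, with $\gamma(s) \to \pm 1$ as $s\to\pm\infty$ (the upper unit semicircle). Given $x>0$, writing $x=e^{t_0}$ and approaching $x$ along $\{t=t_0\}$ with $s\to+\infty$, the image $\g_a(s,t_0)$ corresponds to $e^{\sqrt{1+a}\cdot t_0}\gamma(\bar s(s))$, which tends to $e^{\sqrt{1+a}\cdot t_0} = x^{\sqrt{1+a}}$ since $\bar s(s)\to+\infty$. Setting $\alpha=\sqrt{1+a}$ yields $\psi_\alpha(x)=x^\alpha$, and the case $x<0$ is identical after $s\to-\infty$, while $\g_a$ clearly fixes $0$ and $\infty$. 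The only delicate point is fixing the orientation of $\g_a$ on $\ell_0$ and the transverse direction; both are forced by the orientation-preserving condition built into the construction of minimal Lagrangian maps, so the rest of the argument reduces to a direct computation.
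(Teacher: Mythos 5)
Your proof is correct and follows essentially the same route as the paper: identify the stretch factor $g_a(0)=\sqrt{1+a}$ along the common axis $\ell$, use the $T_\delta$-equivariance (equivalently, that orthogonal geodesics map to orthogonal geodesics) to propagate this to the boundary, and convert via $t=\log y$ in the upper half-plane. The explicit formula $\g_a(s,t)=(\bar s(s),\sqrt{1+a}\,t)$ is a slightly more detailed packaging of the same argument.
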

\begin{proof}
In the upper half-plane model, from Lemma \ref{lemma global isometry}, $\g_a$ preserves (setwise) the geodesic $\ell=\{x=0\}$, and maps geodesics orthogonal to $\ell$ to geodesics orthogonal to $\ell$. Moreover, observe that the coordinate $t$ is the arclength parameter of $\ell$ (which corresponds to points with coordinates $(0,t)$) for the metric $g_{\Hyp^2}$, while the arclength parameter for the metric $g_{\Hyp^2}(b_a\cdot,b_a\cdot)$ is $t'=t/g_a(0)$. That is, $\g_a$ stretches the geodesic $\ell$ by a uniform factor
$$g_a(0)=\sqrt{1+a}~.$$
Since moreover $\g_a$ maps the point $i$ (which has coordinates $(s,t)=(0,0)$) to itself, $\g_a$ sends a geodesic orthogonal to $\ell$, at (signed along $\ell$) distance $t$ from $i$, to the geodesic orthogonal to $\ell$ at signed distance $g_a(0)t$ from $i$.

Since the arclength parameter of $\ell$, in the upper half-plane model, is $t=\log y$, $\g_a$ maps $yi=e^t i$ to $e^{g_a(0)t}i=y^{\sqrt{1+a}}i$. But the endpoints at infinity of a geodesic orthogonal to $\ell$ through $yi\in\Hyp^2$ are $y$ and $-y$, and therefore $\g_a$ extends to $\RP^1$ with boundary value
$$\psi_a(x)=x^{\sqrt{1+a}}$$
if $x\geq 0$, and 
$$\psi_a(-x)=-(|x|^{\sqrt{1+a}})$$
if $x< 0$.
This concludes the proof.
\end{proof}

We conclude this subsection by giving an estimate on the cross-ratio norm of $\psi_\alpha$.

\begin{lemma} \label{lemma cross-ratio psi alpha}
Let $\psi_\alpha:\RP^1\to\RP^1$ be the orientation-preserving homeomorphism:
$$\psi_\alpha(x)=\begin{cases} x^{\alpha} & \textrm{if }x\geq 0 \\  -|x|^{\alpha} & \textrm{if }x< 0 \end{cases}~.$$
Then:
\begin{itemize}
\item $||\psi_\alpha||_{cr}\geq \log(2^\alpha-1)~,$
\item As $\alpha\to 1^+$, $||\psi_\alpha||_{cr}\geq C(\alpha-1)+O(\alpha-1)^2~,$ \\
where $C=\sqrt 2\left(\log (\sqrt 2+1)-\log (\sqrt 2-1)\right)\approx 2.49.$
\end{itemize}
\end{lemma}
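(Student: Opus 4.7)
The plan is, for each part of the lemma, to exhibit an explicit symmetric quadruple $Q$ (i.e.\ with $cr(Q)=-1$) and bound $||\psi_\alpha||_{cr}$ below by $|\log|cr(\psi_\alpha(Q))||$, as permitted by Definition~\ref{defi cross ratio norm}.

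For the first inequality, I would take the simple quadruple $Q=(0,1,2,\infty)$. Using \eqref{eq cross ratio} one checks at once that $cr(Q)=-1$, and since all coordinates are nonnegative, $\psi_\alpha(Q)=(0,1,2^\alpha,\infty)$; the same formula then gives $cr(\psi_\alpha(Q))=-(2^\alpha-1)$. As $\alpha\geq 1$ forces $2^\alpha-1\geq 1$, we obtain $|\log|cr(\psi_\alpha(Q))||=\log(2^\alpha-1)$, which is the first bound.

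For the asymptotic inequality near $\alpha=1$, the key is to pick a symmetric quadruple whose image under $\psi_\alpha$ is nontrivial to first order in $\alpha-1$. I would use $Q_s=(-s,-1,1,s)$ with $s>1$, for which \eqref{eq cross ratio} yields $cr(Q_s)=-4s/(s-1)^2$; imposing $cr(Q_s)=-1$ gives $s^2-6s+1=0$, whose relevant root is $s=3+2\sqrt{2}=(\sqrt 2+1)^2$. Since $\psi_\alpha$ is odd, $\psi_\alpha(Q_s)=(-s^\alpha,-1,1,s^\alpha)$, so the same formula gives $cr(\psi_\alpha(Q_s))=-4s^\alpha/(s^\alpha-1)^2$.

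It remains to Taylor-expand $\log|cr(\psi_\alpha(Q_s))|$ around $\alpha=1$. Setting $u=s^\alpha$ and using $du/d\alpha=u\log s$, a direct computation gives
$$\frac{d}{d\alpha}\log|cr(\psi_\alpha(Q_s))|=-\log(s)\cdot\frac{s^\alpha+1}{s^\alpha-1}.$$
At $\alpha=1$ one has $(s+1)/(s-1)=(4+2\sqrt 2)/(2+2\sqrt 2)=\sqrt 2$ after rationalizing, while the identity $(\sqrt 2+1)(\sqrt 2-1)=1$ gives $\log s=2\log(\sqrt 2+1)=\log(\sqrt 2+1)-\log(\sqrt 2-1)$. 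Thus the derivative equals $-C$ in absolute value, and the claimed asymptotic lower bound follows from a first-order Taylor expansion together with $\log|cr(\psi_1(Q_s))|=0$. The only delicate step is the \emph{a priori} choice of $s$; once this is in place the argument is purely computational, and I expect no conceptual obstacle.
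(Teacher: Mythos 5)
Your proof is correct and follows essentially the same route as the paper: for the first bound the paper uses the quadruple $(0,a,2a,\infty)$ (yours is the case $a=1$), and for the second it uses the symmetric quadruple $(-(\sqrt 2+1),\,-(\sqrt 2-1),\,\sqrt 2-1,\,\sqrt 2+1)$, which is the rescaling of your $(-s,-1,1,s)$ with $s=(\sqrt 2+1)^2$ noted in Remark \ref{remark optimal quadruple} to be equivalent under the scaling M\"obius maps that $\psi_\alpha$ conjugates. The only cosmetic difference is that you obtain the first-order coefficient by differentiating $\log|cr(\psi_\alpha(Q_s))|$ in $\alpha$, while the paper expands $(\sqrt 2+1)^\alpha-(\sqrt 2-1)^\alpha$ directly.
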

\begin{proof}
For the first inequality, consider the quadruple $Q=(x_1,x_2,x_3,x_4)$ with:
$$x_1=0\qquad x_2=a\qquad x_3=2a\qquad x_4=\infty~,$$
(for some $a>0$), which is easily seen to be symmetric. Then 
$$cr(\psi_\alpha(Q))=-\frac{(2a)^\alpha-a^\alpha}{a^\alpha}=-(2^\alpha-1)~,$$
hence $|\log|cr(\psi_\alpha(Q))||=\log(2^\alpha-1)$. Since $||\psi_\alpha||_{cr}\geq |\log|cr(\psi_\alpha(Q))||$ by Definition \ref{defi cross ratio norm}, this concludes the first inequality.

To prove the second inequality, consider the following quadruple of points $Q'$:
$$x_1=-\sqrt 2-1\qquad x_2=1-\sqrt 2\qquad x_3=\sqrt 2-1\qquad x_4=\sqrt 2+1~.$$
By a direct computation using Equation \eqref{eq cross ratio}, we have $cr(Q')=-1$. On the other hand,
\begin{align*}
cr(\psi_\alpha(Q'))&=\frac{((\sqrt 2+1)^\alpha+(\sqrt 2+1)^\alpha)((\sqrt 2-1)^\alpha+(\sqrt 2-1)^\alpha)}{-((\sqrt 2+1)^\alpha-(\sqrt 2-1)^\alpha)^2} \\
&=-\frac{4(\sqrt 2+1)^\alpha(\sqrt 2-1)^\alpha}{((\sqrt 2+1)^\alpha-(\sqrt 2-1)^\alpha)^2}=-\frac{4}{((\sqrt 2+1)^\alpha-(\sqrt 2-1)^\alpha)^2}~.
\end{align*}
Now, observe that $(\sqrt 2+1)^\alpha-(\sqrt 2-1)^\alpha\geq 2$. Hence
\begin{align*}
|\log|cr(\psi_\alpha(Q'))||=&-\log\left(\frac{4}{((\sqrt 2+1)^\alpha-(\sqrt 2-1)^\alpha)^2}\right) \\
=&2\log\left((\sqrt 2+1)^\alpha-(\sqrt 2-1)^\alpha\right)-\log 4 \\
=&\left((\sqrt 2+1)\log (\sqrt 2+1)-(\sqrt 2-1)\log (\sqrt 2-1)\right)(\alpha-1)+O(\alpha-1)^2 \\
=&\sqrt 2\left(\log (\sqrt 2+1)-\log (\sqrt 2-1)\right)(\alpha-1)+O(\alpha-1)^2~.
\end{align*}
Since again $||\psi_\alpha||_{cr}\geq |\log|cr(\psi_\alpha(Q'))||$, this concludes the proof.
\end{proof}

\begin{remark} \label{remark optimal quadruple}
An asymptotic expansion of the function $\log(2^\alpha-1)$ which appears in the first inequality gives:
$$||\psi_\alpha||_{cr}\geq C(\alpha-1)+O(\alpha-1)^2~,$$
where one obtains $C=2\log 2\approx 1.39$. Hence the second choice of quadruple $Q$ in the proof of Lemma \ref{lemma cross-ratio psi alpha}
is necessary to refine the value of the constant $C$.

The second inequality could be achieved by taking any quadruple $Q'$, satisfying $cr(Q')=-1$, which is symmetric with respect to reflection in $x=0$. In fact, the homeomorphism $\psi_\alpha$ conjugates the action of the M\"obius transformation $x\mapsto \lambda x$ to the M\"obius transformation $x\mapsto \lambda^\alpha x$. Therefore all such quadruples $Q'$ are equivalent in terms of cross-ratio distortion of $\psi_\alpha$. Picking a quadruple of the form
$Q'=(-b,-a,a,b)$ for some $b>a>0$, a direct computation shows that $cr(Q')=-1$ if and only if 
$b=(3+2\sqrt 2)a=(\sqrt 2+1)^2 a$. This motivates the choice of the quadruple $Q'$ in the proof, which corresponds to the choice $a=\sqrt 2-1$ and thus $b=(\sqrt 2+1)^2(\sqrt 2-1)=\sqrt 2+1$.
\end{remark}

\subsection*{Computing the maximal dilatation}

On the other hand, we have the following value for the maximal dilatation of $\g_a$:

\begin{lemma} \label{lemma max dil power}
The maximal dilatation of the minimal Lagrangian diffeomorphisms $\g_a:\Hyp^2\to\Hyp^2$ is:
$$K(\g_a)=1+a~.$$
\end{lemma}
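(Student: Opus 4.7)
The plan is to reuse the eigenvalue characterization of maximal dilatation that already appeared in the proof of Lemma \ref{lemma max dil simple}, applied to the tensor $b_a$ constructed in Lemma \ref{lemma global isometry}. Since $\g_a$ was defined as an isometry between $g_{\Hyp^2}(b_a\cdot,b_a\cdot)$ on the source and $g_{\Hyp^2}$ on the target, one has $\g_a^{*}g_{\Hyp^2}=g_{\Hyp^2}(b_a\cdot,b_a\cdot)$, and $b_a$ is the unique self-adjoint positive definite section giving this relation in view of Remark \ref{rmk bundle morphism}.

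Given this, the argument from Lemma \ref{lemma max dil simple} applies verbatim: for each point $z\in\Hyp^2$, the eccentricity of the ellipse $d(\g_a)_z(\mathbb S^1)$ equals $\eta_+(z)/\eta_-(z)$, where $\eta_\pm$ denote the eigenvalues of $b_a$ at $z$. Since $\det b_a=1$ by construction (this was the defining ODE in Lemma \ref{lemma ode unit det}), we have $\eta_-=\eta_+^{-1}$, and hence
\[
K(\g_a)=\sup_{z\in\Hyp^2}\eta_+(z)^2.
\]

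Reading off $b_a$ in the $(s,t)$-coordinates from Lemma \ref{lemma global isometry}, $b_a$ is already diagonal with entries $1/g_a(s)$ and $g_a(s)$. Since $a\ge 0$ forces $g_a(s)=\sqrt{1+a(1-\tanh^2 s)}\ge 1$, the larger eigenvalue is $\eta_+(s,t)=g_a(s)$. The function $1-\tanh^2 s$ attains its maximum value $1$ at $s=0$ and decays to $0$ as $|s|\to\infty$, so $\sup_{s\in\R} g_a(s)=g_a(0)=\sqrt{1+a}$. Plugging in gives $K(\g_a)=(\sqrt{1+a})^2=1+a$.

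There is no real obstacle here; the content has been prepared by the preceding lemmas. The only thing to be slightly careful about is to justify that the dilatation formula in terms of eigenvalues of $b$ applies to $\g_a$ in the same way it did for the $\f_k$ in Section \ref{sec earth simple}, which is immediate once one observes that the positivity and unit-determinant properties of $b_a$ in Lemma \ref{lemma global isometry} match exactly the hypotheses used in the earlier argument.
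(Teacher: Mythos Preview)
Your proof is correct and follows essentially the same approach as the paper: both compute the pointwise eccentricity as the ratio of the eigenvalues $g_a(s)$ and $1/g_a(s)$ of $b_a$, observe that this equals $g_a(s)^2=1+a(1-\tanh^2 s)$, and maximize at $s=0$ to obtain $K(\g_a)=1+a$. Your version is slightly more explicit in justifying why $\g_a^{*}g_{\Hyp^2}=g_{\Hyp^2}(b_a\cdot,b_a\cdot)$ and why $g_a(s)$ is the larger eigenvalue, but the argument is otherwise identical.
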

\begin{proof}
As for the proof of Lemma \ref{lemma max dil simple}, the eccentricity of the ellipses $d\g_a(\mathbb S^1)$ at every point can be computed as the ratio between the two eigenvalues of $b_a$, which are $g_a(s)$ and $1/g_a(s)$. Hence the eccentricity is:
$$g_a(s)^2=1+a(1-\tanh^2(s))~,$$
which is maximized for $s=0$. Hence we have
$$K(\g_a)=g_a(0)^2=1+a~,$$
as claimed.
\end{proof}

\subsection*{Comparison when $a\to 0$}
We have showed that the minimal Lagrangian diffeomorphism $\g_a$ coincides with the unique minimal Lagrangian extension $\g_a=f_{\psi_{\alpha}}$ of the power function $\psi_\alpha$, for $\alpha=\sqrt{1+a}$.
We are now ready to conclude the study of the ratio $\log K(f_{\psi_{\alpha}})/||\psi_\alpha||_{cr}$. Let us start by the case $\alpha\to 1$.

\begin{reptheorem}{thm power a=0}[Case $\alpha\to 1$]
Let $f_{\psi_\alpha}:\Hyp^2\to\Hyp^2$ be the minimal Lagrangian diffeomorphisms which extend the power function $\psi_\alpha$. Then
$$\limsup_{\alpha\to 1^+}\frac{\log K(f_{\psi_\alpha})}{||\psi_\alpha||_{cr}}\leq \frac{\sqrt 2}{\log \left(3+\sqrt 2\right)}~.$$
\end{reptheorem}
\begin{proof}
We shall study the limit of the ratio $\log K(f_{\psi_{\alpha}})/||\psi_\alpha||_{cr}$ as $\alpha\to 1^+$ --- that is, as $f_{\psi_{\alpha}}$ approaches the identity. On the one hand, we have from Lemma \ref{lemma cross-ratio psi alpha}:
$$||\psi_\alpha||_{cr}\geq \sqrt 2\left(\log (\sqrt 2+1)-\log (\sqrt 2-1)\right)(\alpha-1)+O(\alpha-1)^2~,$$
while on the other, recalling that $\alpha=\sqrt{1+a}$:
$$\log K(f_{\psi_\alpha})=\log(1+a)=\log\alpha^2=2\log\alpha=2(\alpha-1)+O((\alpha-1^2))~.$$
Therefore one has:
$$\limsup_{\alpha\to 1^+}\frac{\log K(f_{\psi_\alpha})}{||\psi_\alpha||_{cr}}\leq \frac{2}{\sqrt 2(\log (\sqrt 2+1)-\log (\sqrt 2-1))}=\frac{\sqrt 2}{\log \left(3+\sqrt 2\right)}\approx 0.80~,$$
as claimed.
%However, this does not improve the bound from above we get from Theorem \ref{}. Nevertheless, the lower bound of Lemma \ref{lemma max dil power} on $||\psi_\alpha||_{cr}$ might not be sharp, hence the actual value of $\lim{K(f_{\psi_\alpha})}/{||\psi_\alpha||_{cr}}$ might be smaller.
\end{proof}

\begin{remark}
Numerical computation seems to show that $||\psi_\alpha||_{cr}$ is actually achieved at the quadruple $Q'$ introduced in the proof of Lemma \ref{lemma cross-ratio psi alpha}. The value ${\sqrt 2}/{\log \left(3+\sqrt 2\right)}\approx 0.80$ is thus expected to be the actual value of the limit  of ${\log K(f_{\psi_\alpha})}/{||\psi_\alpha||_{cr}}$ as $\alpha\to 1^+$. This would have as a consequence an improvement on the lower bound for the limit superior in Corollary \ref{cor ph to 0}.

However, computational difficulties are encountered when trying to prove that the second estimate of Lemma \ref{lemma cross-ratio psi alpha} is sharp, namely, that the quadruple $Q'$ achieves the cross-ratio norm (see also Remark \ref{remark optimal quadruple}), which is also an expected result given the symmetries of the problem. 

A nice way to produce an upper bound on the cross-ratio norm $||\psi_\alpha||_{cr}$ uses an argument similar to \cite[Proposition 2]{hu_muzician_douadyearle}. Namely, one can estimate $||\psi_\alpha||_{cr}$ in terms of the quantity $K_q(\psi_\alpha)$, which is defined as
$$K_q(\psi_\alpha)=\sup_Q\frac{M(\psi_\alpha(Q))}{M(Q)}~,$$
where $M$ denotes the conformal modulus of the quadrilateral with vertices in the quadruple $Q$. Following \cite{MR0344463}, one can then estimate $K_q$ by the maximal dilatation of the extremal extension, which by \cite{strebel1, strebel2} is known to be equal to $\alpha$. However, performing the precise computations (we omit the details here) leads to inequality:
$$\liminf_{\alpha\to 1^+}\frac{\log K(f_{\psi_\alpha})}{||\psi_\alpha||_{cr}}\geq 2\left(\frac{E(1/\sqrt 2)}{K(1/\sqrt 2)}\right)-1\approx 0.46~,$$
 which (unfortunately) does not improve the lower bound $1/2$ which we already know from Theorem \ref{thm seppi 2} (see Equation \eqref{eq seppi 2}). 

\end{remark}

\subsection*{Comparison when $a\to +\infty$}
Let us finally consider the case $a\to +\infty$.

\begin{reptheorem}{thm power a=0}[Case $\alpha\to +\infty$]
Let $f_{\psi_\alpha}:\Hyp^2\to\Hyp^2$ be the minimal Lagrangian diffeomorphisms which extend the power function $\psi_\alpha$. Then
$$\lim_{\alpha\to +\infty}\frac{\log K(f_{\psi_\alpha})}{||\psi_\alpha||_{cr}}=0~.$$
\end{reptheorem}
\begin{proof}
First, observe that $\alpha=\sqrt{1+a}$ tends to $+\infty$ as $a\to +\infty$, and $f_{\psi_\alpha}=\g_a$. 
From Lemma \ref{lemma cross-ratio psi alpha}, we have 
$$||\psi_\alpha||_{cr}\geq \log(2^\alpha-1)~.$$
As in the previous proof, from Lemma \ref{lemma max dil power} we have
$$\log K(f_{\psi_\alpha})=2\log \alpha~.$$
Hence we get 
$$\lim_{\alpha\to +\infty}\frac{\log K(f_{\psi_\alpha})}{||\psi_\alpha||_{cr}}\leq \lim_{\alpha\to +\infty}\frac{2\log \alpha}{\log(2^\alpha-1)}=0~.$$
This concludes the proof.
\end{proof}

\section{Conclusions} \label{sec conclusions}
As already observed in Section \ref{sec preliminaries}, see Theorem \ref{thm seppi} and Theorem \ref{thm seppi 2}, there exists a universal constant $C>0$ such that, for every quasisymmetric homeomorphism $\ph$,  the minimal Lagrangian extension $f_\ph$ satisfies:
$$\frac{\log K(f_{\ph})}{||\ph||_{cr}}\leq C~.$$
In this section we apply Theorem \ref{thm simple earth k=0} and Theorem \ref{thm power a=0} to give some conditions on the best possible constant $C$, in particular when $||\ph||_{cr}\to 0$ and $||\ph||_{cr}\to +\infty$.

Let us start by the case $||\ph||_{cr}\to 0$.

\begin{repcor}{cor ph to 0}
If $f_\ph:\Hyp^2\to\Hyp^2$ denotes the minimal Lagrangian extension of a quasisymmetric homeomorphism $\ph:\RP^1\to\RP^1$, then:
$$\liminf_{||\ph||_{cr}\to 0^+}\frac{\log K(f_\ph)}{||\ph||_{cr}}\in\left[\frac{1}{2},\frac{2}{\pi}\right]\qquad\text{and}\qquad \limsup_{||\ph||_{cr}\to 0^+}\frac{\log K(f_\ph)}{||\ph||_{cr}}\in\left[\frac{2}{\pi},+\infty\right)~.$$
\end{repcor}
\begin{proof}
The inequality
$$\liminf_{||\ph||_{cr}\to 0^+}\frac{\log K(f_\ph)}{||\ph||_{cr}}\geq \frac{1}{2}$$
follows from Theorem \ref{thm seppi 2}, see Equation \eqref{eq seppi 2}.

On the other hand, from Theorem \ref{thm simple earth k=0} the extensions $f_{\ph_\lambda}$ of the simple earthquake $\ph_\lambda$ satisfy:
$$\lim_{\alpha\to 1^+}\frac{\log K(f_{\psi_\alpha})}{||\psi_\alpha||_{cr}}= \frac{2}{\pi}~.$$
Hence this implies
$$\liminf_{||\ph||_{cr}\to 0^+}\frac{\log K(f_\ph)}{||\ph||_{cr}}\leq \frac{2}{\pi}~.$$

For the second claim, we know from Theorem \ref{thm seppi} that 
$$ \limsup_{||\ph||_{cr}\to 0^+}\frac{\log K(f_\ph)}{||\ph||_{cr}}$$
is finite. Finally, again from  Theorem \ref{thm simple earth k=0},
$$ \limsup_{||\ph||_{cr}\to 0^+}\frac{\log K(f_\ph)}{||\ph||_{cr}}\geq \frac{2}{\pi}~.$$
This concludes the proof of the corollary.
\end{proof}

When $||\ph||_{cr}\to +\infty$ we can conclude the following corollary.

\begin{repcor}{cor ph to infty}
If $f_\ph:\Hyp^2\to\Hyp^2$ denotes the minimal Lagrangian extension of a quasisymmetric homeomorphism $\ph:\mathbb S^1\to\mathbb S^1$, then:
$$\liminf_{||\ph||_{cr}\to +\infty}\frac{\log K(f_\ph)}{||\ph||_{cr}}=0\qquad\text{and}\qquad \limsup_{||\ph||_{cr}\to +\infty}\frac{\log K(f_\ph)}{||\ph||_{cr}}\in\left[\frac{\sqrt 2}{2},+\infty\right)~.$$
\end{repcor}
\begin{proof}
From Theorem \ref{thm power a=0}, for the extension $f_{\psi_\alpha}$ of the power function $\psi_\alpha$ we have 
$$\lim_{\alpha\to +\infty}\frac{\log K(f_{\psi_\alpha})}{||\psi_\alpha||_{cr}}=0~,$$
and therefore necessarily 
$$\liminf_{||\ph||_{cr}\to +\infty}\frac{\log K(f_\ph)}{||\ph||_{cr}}=0~.$$
On the other hand, Theorem \ref{thm simple earth k=0} shows
$$\lim_{\lambda\to +\infty}\frac{\log K(f_{\ph_\lambda})}{||\ph_\lambda||_{cr}}=\frac{\sqrt 2}{2}$$
for the extension $f_{\ph_\lambda}$ of the simple earthquake $\ph_\lambda$. 
Hence $$\limsup_{||\ph||_{cr}\to +\infty}\frac{\log K(f_\ph)}{||\ph||_{cr}}\geq\frac{\sqrt 2}{2}~.$$
Finally, finiteness of the limit superior is again consequence of Theorem \ref{thm seppi}.
\end{proof}

Let us also remark that, for the extension of the simple earthquake $\ph_\lambda$, Strebel in \cite{strebel1} and \cite{strebel2} proved that the extremal extension $f_{\ph_\lambda}^e$ of $\ph_\lambda$ satisfies $K(f_{\ph_\lambda}^e)=O(\lambda^2)$ as $\lambda\to +\infty$. On the other hand, from Theorem \ref{thm simple earth k=0} we showed that
$${K(f_{\ph_\lambda})}\geq {e^{C||\ph_\lambda||_{cr}}}$$
for large $\lambda$ and for some $C>0$. 
Hence we have:

\begin{repcor}{cor exp far}
The minimal Lagrangian extension of the simple earthquake $\ph_\lambda$ stays exponentially far from being extremal as $\lambda\to +\infty$.
\end{repcor}

Such result is the analogue for minimal Lagrangian extensions of \cite[Theorem 5]{hu_muzician_douadyearle}, which proves that the Douady-Earle extension of $\ph_\lambda$ stays exponentially far from being extremal as $\lambda\to +\infty$. More precisely, Hu and Muzician showed (see \cite[Theorem 3]{hu_muzician_douadyearle}) that, if $f_{\ph_\lambda}^{D\!E}$ denotes the Douady-Earle extension of $\ph_\lambda$, then
$$\lim_{\lambda\to +\infty}\frac{\log K(f^{D\!E}_{\ph_\lambda})}{||\ph_\lambda||_{cr}}\geq\frac{1}{4}~,$$
which should be compared with Theorem \ref{thm simple earth k=0} of the present paper.

%\cleardoublepage
\bibliographystyle{alpha}
\bibliographystyle{ieeetr}
\bibliography{../bs-bibliography}

\end{document}